\def\NN{\mathbb N}
\def\RR{\mathbb R}
\def\HH{\mathcal H}
\def\xx{\mathbf{x}}
\def\yy{\mathbf{y}}
\def\zz{\mathbf{z}}
\newcommand{\dl}{\mathcal{D}(L)}
\newcommand{\dn}{\mathcal{D}(A)\cap\mathcal{D}(L)}
\newcommand{\gp}{g_\rho}
\newcommand{\la} {\lambda}
\newcommand{\al} {\alpha}
\newcommand{\fz}{f_{\mathbf{z},\la}}
\newcommand{\fp}{f_\rho}
\newcommand{\LL}{\mathscr{L}^2(X,\nu;Y)}
\newcommand{\sx}{S_\xx}
\newcommand{\tx}{T_\xx}
\newcommand{\ip}{I_\nu}
\newcommand{\tp}{T_\nu}
\newcommand{\fbar}{\bar f}
\newcommand{\argmin}{\operatornamewithlimits{argmin}}
\newcommand{\paren}[1]{\left(#1\right)}
\newcommand{\brac}[1]{\left\{#1\right\}}
\newcommand{\inner}[1]{\left\langle#1\right\rangle}
\newcommand{\norm}[1]{\left\|#1\right\|}
\newcommand{\tr}{\operatorname{tr}}
\newtheorem{theorem}{Theorem}[section]
\newtheorem{lemma}[theorem]{Lemma}
\newtheorem{corollary}[theorem]{Corollary}
\newtheorem{definition}[theorem]{Definition}
\newtheorem{proposition}[theorem]{Proposition}
\newtheorem{assumption}{Assumption}
\begin{document}
\title[Tikhonov regularization for nonlinear statistical inverse  problems]{Tikhonov regularization with oversmoothing penalty for nonlinear statistical inverse problems} 

\author{Abhishake Rastogi}
\address{Institute of Mathematics, University of Potsdam,
  Karl-Liebknecht-Strasse 24-25, 14476 Potsdam, Germany}
\email{abhishake@uni-potsdam.de}

\date{}
\keywords{Statistical inverse problem; Tikhonov regularization; Hilbert Scales; Reproducing kernel Hilbert space; Minimax convergence rates.}
\subjclass[2010]{Primary: 62G20; Secondary: 62G08, 65J15, 65J20, 65J22.}

\begin{abstract}
In this paper, we consider the nonlinear ill-posed inverse problem with noisy data in the statistical learning setting. The Tikhonov regularization scheme in Hilbert scales is considered to reconstruct the estimator from the random noisy data. In this statistical learning setting, we derive the rates of convergence for the regularized solution under certain assumptions on the nonlinear forward operator and the prior assumptions. We discuss estimates of the reconstruction error using the approach of reproducing kernel Hilbert spaces. 
\end{abstract}
\maketitle

\section{Introduction}
We consider the nonlinear ill-posed operator equation of the form
\begin{equation*}
A(f) = g
\end{equation*}
with a nonlinear forward operator~$A: \HH \to \HH'$ between the infinite-dimensional Hilbert spaces~$\HH$ and~$\HH'$. Moreover,~$\HH'$ is the space of functions~$g: X\to Y$ for a Polish space~$X$ (the input space) and a real separable Hilbert space~$Y$ (the output space). Ill-posed inverse problems have important applications in the field of science and technology (see, e.g.,~\cite{Engl,Hofmann1986,Schuster,Tikhonov}). 

In classical inverse problem setting, we observe the approximation~$g_\delta$ of the function~$g$ with~$\norm{g-g_\delta}_{\HH'}\leq \delta$ for some known noise level~$\delta$, then we reconstruct the estimator of the quantity~$f$ through the regularization schemes. Here we consider the problem in statistical learning setting in which we observe the random noisy image~$y_i$ at the points~$x_i$. The problem can be described as follows:
\begin{equation}\label{Model}
y_i= g(x_i)+\varepsilon_i,\qquad  g=A(f)
\end{equation} 
where~$\varepsilon_i$ is the random observational noise with~$1\leq i \leq m$ and~$m$ is called the sample size.  

The model~(\ref{Model}) covers nonparametric regression under random design (which we also call the direct problem, i.e.,~$A=I$), and the linear statistical inverse learning problem. Thus, introducing a general nonlinear operator~$A$ gives a unified approach to the different learning problems.  

Suppose the random observations are drawn identically and independently according to the joint probability measure~$\rho$ on the sample space~$Z=X\times Y$ and the probability measure~$\rho$ can be splitting as follows:
\begin{equation*}
\rho(x,y)=\rho(y|x)\nu(x),
\end{equation*}
where~$\rho(y|x)$ is the conditional probability distribution of~$y$ given~$x$ and~$\nu(x)$ is the marginal probability distribution on $X$.

For the statistical inverse problem \eqref{Model}, the goodness of an estimator~$f$ can be measured through the expected risk:
\begin{align}\label{emp.risk}
\mathcal{E}_\rho(f)=\int_Z\norm{A(f)(x)-y}_Y^2d\rho(x,y).
\end{align}

Further, we assume that~$\int_Y\norm{y}_Y^2d\rho(y|x)<\infty$ for any~$x\in X$. Then for the function  
\begin{equation*}
\gp(x)=\int_Y y d\rho(y|x),
\end{equation*}
the expected risk can be expressed as follows:
\begin{align}\label{emp.prop}
\mathcal{E}_\rho(f)=\int_X\norm{A(f)(x)-\gp(x)}_{Y}^2d\nu(x)+\int_Z\norm{\gp(x)-y}_Y^2d\rho(x,y).
\end{align}
Hence we observe that finding the minimizer of the expected risk is equivalent to obtaining the minimizer of the quantity~$\int_X\norm{A(f)(x)-\gp(x)}_{Y}^2d\nu(x)$.  

Since the probability measure~$\rho$ is unknown, the only information of the probability measure is known through the sample. Therefore we use the regularization methods to stably reconstruct the estimator of the quantity $f$. The Tikhonov regularization is widely considered in both the classical inverse problems and the statistical learning theory. We consider the Tikhonov regularization in Hilbert scales which consists of the error term measuring the fitness of data and oversmoothing penalty.  We introduce an unbounded, closed, linear, self-adjoint, strictly positive operator~$L: \dl \subset \HH \to \HH$ with a dense domain of definition~$\dl \subset \HH$ to treat an oversmoothing penalty in terms of a Hilbert scale. For some $\ell > 0$, the operator $L$ satisfies:
\begin{equation}\label{L.op}
\norm{Lf}_{\HH} \geq \ell\norm{f}_{\HH}\qquad \text{for all}\quad f \in \dl . 
\end{equation}  

For a given sample $\zz=\{(x_i,y_i)\}_{i=1}^m$, we define Tikhonov regularization scheme in Hilbert scales: 
\begin{equation}\label{Tikhonov}
\fz=\argmin\limits_{f\in\dn}\brac{\frac{1}{m}\sum\limits_{i=1}^m\norm{ A(f)(x_i)-y_i}_Y^2+\la\norm{L (f-\fbar)}_{\HH}^2}.
\end{equation}
Here~$\fbar \in \dn$ denotes some initial guess of the true solution, which offers the possibility to incorporate a priori information. Here~$\la$ is a positive regularization parameter which controls the trade-off between the error term and the complexity of the solution.

In many practical problems, the operator~$L$ which influences the properties of the regularized approximation is chosen to be a differential operator in some appropriate function spaces, e.g., the space of square-integrable functions~$\LL$. It is well-known that the standard Tikhonov regularization suffers the saturation effect. The finite qualification of Tikhonov regularization can be overcome using the Hilbert scales. The problem \eqref{Tikhonov} is non-convex, therefore the minimizer may not exist in general. For the continuous and weakly sequentially closed\footnote{i.e., if a sequence~$(f_m)_{m\in\NN} \subset \mathcal{D}(A)$ converges weakly to some~$f \in \HH$ and if the sequence~$(A(f_m))_{m\in\NN}\subset \LL$ converges weakly to some~$g\in \LL$, then~$f\in\mathcal{D}(A)$ and~$A(f)=g$.} operator~$A$, there exists a global minimizer of the functional in~(\ref{Tikhonov}). But it is not necessarily unique since~$A$ is nonlinear (see~\cite[Section~4.1.1]{Schuster}). 

Generally, in the classical inverse problem literature (see~\cite{Bissantz2004,Engl,Hohage,Schuster} and references therein), the 2-step approaches are considered in which first they construct the estimator of the function~$g$ by~$g_\delta$ from the observations~$\{(x_i,y_i)\}_{i=1}^m$, then estimate the quantity~$f$ stably using the various regularization schemes. Here we estimate the quantity~$f$ in a 1-step method using the Tikhonov regularization scheme~\eqref{Tikhonov} in the statistical learning setting. 

Now we review the work in the literature related to the considered problem. Regularization schemes in Hilbert scales are widely considered in classical inverse problems (with deterministic noise)~\cite{Egger2018,Hofmann2018,Mathe2006,Nair2005,Natterer1984,Neubauer92,Tautenhahn1996}.  On the contrary, the inverse problems with random observations are not well-studied. The linear statistical inverse problems are studied in~\cite{Cavalier_2011}, under the assumption that the marginal probability measure~$\nu$ is known which is an unrealistic assumption since the only information is available through the input points~$(x_1,\ldots,x_m)$. This problem is also discussed in~\cite{Blanchard} for the general random design with an unknown marginal probability measure.


In this nonlinear setup, the reference~\cite{OSullivan} established the error estimates for the generalized Tikhonov regularization for~(\ref{Model}) using the linearization technique in a random design setting. In other work,  the authors~\cite{Bissantz2004} consider a 2-step approach, however, again under the assumption of the norm in~$\LL$ being known. The references~\cite{Bauer09} and~\cite{Hohage,Werner2020} consider respectively a Gauss-Newton algorithm and the Tikhonov regularization for certain nonlinear inverse problem, but also in the idealized setting of Hilbertian white or colored noise with known covariance, which can only cover sampling effects when~$\LL$ is known. Loubes et al.~\cite{Loubes} discussed the problem~(\ref{Model}) under a fixed design and concentrate on the problem of model selection. Finally, the recent work~\cite{Abhishake2019} discussed the rates of convergence for the Tikhonov regularization of the nonlinear inverse problem. 

In contrast with the existing work~\cite{Bauer09,Bissantz2004,Hohage,Werner2020} our results are improved in three respects:
\begin{itemize}
\item We do not restrict ourselves to the Hilbertian white or colored noise.

\item We consider a 1-step approach rather than existing 2-step approaches for the nonlinear inverse problems.

\item The considered approach does not suffer the saturation effect of standard Tikhonov regularization.
\end{itemize}

Following the work~\cite{Abhishake2019,Blanchard}, we develop the error analysis for the Tikhonov regularization scheme for the nonlinear inverse problems in Hilbert scales in the statistical learning setting. We establish the error bounds for the statistical inverse problems in reproducing kernel approach. We discuss the rates of convergence for Tikhonov regularization under certain assumptions on the nonlinear forward operator and the prior assumptions. 

Some structural assumptions are required on the nonlinear mappings~$A$ to establish the convergence analysis. We consider the widely assumed conditions in the literature of the classical inverse problems, first assumed in~\cite{Hohage}, and presented in detail in the monograph~\cite{Schuster}. We assume that the operator $A$ is Fr\'echet differentiable at the true solution, the Fr\'echet derivative is Lipschitz continuous and satisfies the link condition (for precise statement see Assumption~\ref{A.assumption}).


The goal is to analyze the theoretical properties of the Tikhonov estimator~$f_{\zz,\la}$, in particular, the asymptotic performance of the regularization scheme is evaluated by the error estimates of the Tikhonov estimator~$f_{\zz,\la}$ in the reproducing kernel approach. Precisely, we develop a non-asymptotic analysis of Tikhonov regularization~(\ref{Tikhonov}) for the nonlinear statistical inverse problem based on the tools that have been developed for the modern mathematical study of reproducing kernel methods. The challenges specific to the studied problem are that the considered model is an {\it inverse problem} (rather than a pure prediction problem) and {\it nonlinear}. The rate of convergence for the Tikhonov estimator~$f_{\zz,\la}$ to the true solution is described in the probabilistic sense by exponential tail inequalities. For sample size~$m$ and the confidence level~$0<\eta<1$, we establish the bounds of the form
$$\mathbb{P}_{\zz\in Z^m}\left\{\norm{f_{\zz,\la}-f}_{\HH}\leq  \varepsilon(m)\log^2\left(\frac{1}{\eta}\right)\right\}\geq 1-\eta.~$$
Here the function~$m\mapsto \varepsilon(m)$ is a positive decreasing function and describes the rate of convergence as~$m\to \infty$.

The paper is organized as follows. In Section~\ref{Sec-Notation}, we discuss the basic definition and assumptions required in our analysis. In Section~\ref{Sec-analysis}, we discuss the bounds of the reconstruction error under certain assumptions on the (unknown) joint probability measure~$\rho$, and the (nonlinear) mapping~$A$. In Appendix, we present the probabilistic estimates and the preliminary results which provide the tools to obtain the error bounds in reproducing kernel approach.   

\section{Notation and assumptions}\label{Sec-Notation}
In this section, we introduce some basic concepts, definitions, and notations required in our analysis. 

\subsection{Reproducing Kernel Hilbert space and related operators}
We start with the concept of the reproducing kernel Hilbert spaces. It is a subspace of~$\LL$ (the space of square-integrable functions from~$X$ to~$Y$ with respect to the probability distribution~$\nu$) which can be characterized by a symmetric, positive semidefinite kernel and each of its function satisfies the reproducing property. Here we discuss the vector-valued reproducing kernel Hilbert spaces~\cite{Micchelli1} which are the generalization of real-valued reproducing kernel Hilbert spaces~\cite{Aronszajn}.

\begin{definition}[Vector-valued reproducing kernel Hilbert space]
For a non-empty set~$X$ and a real separable Hilbert space~$(Y,\langle\cdot,\cdot\rangle_Y)$, a Hilbert space~$\HH$ of functions from~$X$ to~$Y$ is said to be the vector-valued reproducing kernel Hilbert space, if the linear functional~$F_{x,y}:\HH \to \RR$, defined by
$$F_{x,y}(f)=\langle y,f(x)\rangle_Y \qquad \forall f \in \HH,$$
is continuous for every~$x \in X$ and~$y\in Y$.
\end{definition}

Throughout the paper,~$T^*$ denotes adjoint of an operator~$T$.
\begin{definition}[Operator-valued positive semi-definite kernel]
Suppose~$\mathcal{L}(Y):Y\to Y$ is the Banach space of bounded linear operators. A function~$K:X\times X\to \mathcal{L}(Y)$ is said to be an operator-valued positive semi-definite kernel if 
\begin{enumerate}[(i)]
\item~$K(x,x')^*=K(x',x) \qquad\forall~x,x'\in X.$ 

\item~$\sum\limits_{i,j=1}^N\langle y_i,K(x_i,x_j)y_j\rangle_Y\geq 0  \qquad\forall~\{x_i\}_{i=1}^N\subset X \text{ and } \{y_i\}_{i=1}^N\subset Y.$
\end{enumerate}
\end{definition}

For a given operator-valued positive semi-definite kernel~$K:X \times X \to \mathcal{L}(Y)$, we can construct a unique vector-valued reproducing kernel Hilbert space~$(\HH,\langle\cdot,\cdot\rangle_{\HH})$ of functions from~$X$ to~$Y$ as follows:
\begin{enumerate}[(i)]
\item  We define the linear function
\[
K_x: Y \rightarrow \HH: y \mapsto K_xy,
\]
where~$K_xy:X \to Y:x' \mapsto (K_xy)(x')=K(x',x)y$  for~$x,x'\in X$ and~$y\in Y$.
\item The span of the set~$\{K_xy:x\in X, y\in Y\}$ is dense in~$\HH$.
\item \textbf{Reproducing property:} \[\langle f(x),y\rangle_Y=\langle f,K_xy\rangle_{\HH},\qquad x\in X,~y \in Y,~\forall~f\in \HH,\]
 in other words~$f(x) = K_x^* f$.
\end{enumerate}

Moreover, there is a one-to-one correspondence between operator-valued positive semi-definite kernels and vector-valued reproducing kernel Hilbert spaces~\cite{Micchelli1}. The reproducing kernel Hilbert space becomes real-valued reproducing kernel Hilbert space, in the case that~$Y$ is a bounded subset of~$\RR$, and the corresponding kernel becomes the symmetric, positive semi-definite~$K:X \times X \to \RR$ with the reproducing property~$f(x)=\langle f,K_x\rangle_{\HH}$.  
  
We assume the following assumption concerning the Hilbert space~$\HH'$:
\begin{assumption} \label{assmpt1} 
The space~$\HH'$ is assumed to be a vector-valued reproducing kernel Hilbert space of functions~$f:X\to Y$ corresponding to the kernel~$K:X\times X\to \mathcal{L}(Y)$ such that
  \begin{enumerate}[(i)]
  \item~$K_x:Y\to\HH'$ is a Hilbert-Schmidt
    operator for~$x\in X$ with
    \[\kappa^2:=\sup_{x \in X} \norm{K_x}^2_{HS} = {\sup_{x \in
          X}\tr(K_x^*K_x)}<\infty.\]
  \item For~$y,y'\in Y$, the real-valued function~$\varsigma:X\times X \to \RR:(x,x')\mapsto\langle K_{x}y,K_{x'}y'\rangle_{\HH'}$ is measurable.
  \end{enumerate}
\end{assumption}

Note that in case of real-valued functions ($Y\subset\RR$), Assumption~\ref{assmpt1} simplifies to the condition that the kernel is measurable and~$\kappa^2:=\sup_{x \in X} \norm{K_x}^2_{\HH'}=\sup_{x \in X}K(x,x)<\infty$.

Now we introduce some relevant operators used in the convergence analysis. We introduce the notations for the discrete ordered sets~$\xx=(x_1,\ldots,x_m)$,~$\yy=(y_1,\ldots,y_m)$,~$\zz=(z_1,\ldots,z_m)$. The product Hilbert space~$Y^m$ is equipped with the inner product~$\inner{\yy,\yy'}_m = \frac{1}{m}\sum_{i=1}^m \inner{y_i,y'_i}_Y,$ and the corresponding norm~$\norm{\yy}^2_m=\frac{1}{m}\sum_{i=1}^m\norm{y_i}_Y^2$. We define the {\it sampling operator}~$\sx:\HH' \to Y^m:g\mapsto(g(x_1),\ldots,g(x_m))$, then the adjoint~$\sx^*:Y^m\to\HH'$ is
given by
$$\sx^*\mathbf{c}=\frac{1}{m}\sum_{i=1}^m K_{x_i} c_i,~~~~\forall \mathbf{c}=(c_1,\ldots,c_m)\in Y^m.$$  

Let~$\ip$ be the canonical injection map $\HH'$ to $\LL$. Then we observe that both the canonical injection map~$\ip$ and the sampling operator ~$\sx$ are bounded by~$\kappa$ under Assumption~\ref{assmpt1}, since
\[\norm{\ip f}^2_{\LL}=\int_X\norm{f(x)}_Y^2d\nu(x)
  =\int_X\norm{K_x^*f}_{Y}^2d\nu(x) \leq
  \kappa^2\norm{f}^2_{\HH}
\]
and
 \[ \norm{\sx f}^2_m 
   =\frac{1}{m}\sum_{i=1}^m\norm{f(x_i)}_Y^2
   =\frac{1}{m}\sum_{i=1}^m\norm{K_{x_i}^*f}_Y^2 \leq
   \kappa^2\norm{f}^2_{\HH}.\]

We denote the population version~$\tp=\ip^{\ast}\ip\colon \HH'\to \HH'$, the corresponding covariance operator. The operator~$\tp$ is positive, self-adjoint and depends on both the kernel and the marginal probability measure $\nu$. We also introduce the sampling version operator $\tx=\sx^*\sx$ which is positive, self-adjoint and depends on both the kernel and the inputs $\xx$.

By the spectral theory, the operator~$L^s : \mathcal{D}(L^s) \to \HH$ is well-defined for~$s \in \RR$, and the spaces~$\HH_s := \mathcal{D}(L^s), s \geq 0$ equipped with the
inner product~$\langle f,g \rangle_{\HH_s}=\langle L^s f,L^s g\rangle_\HH,~~f, g \in\HH_s$ are Hilbert spaces. For~$s < 0$, the spaces~$\HH_s$ is defined as completion of~$\HH$ under the norm
$\norm{f}_s := \inner{f, f}_s^{1/2}$. The space~$(\HH_s)~s\in\RR$ is called the Hilbert scale induced by~$L$. We notice that the space~$\HH_0$ is~$\HH$ according to the above notations. The interpolation inequality is an important tool for the analysis:
\begin{equation}\label{interpolation}
\norm{f}_{\HH_r}\leq\norm{f}_{\HH_t}^{\frac{s-r}{s-t}}\norm{f}_{\HH_s}^{\frac{r-t}{s-t}},\qquad f\in \HH_s
\end{equation}
which holds for any~$t < r < s$.  

\subsection{The true solution, noise condition, and nonlinearity structure}
We consider that random observations~$\{(x_i,y_i)\}_{i=1}^m$ follow the model~$y= A(f)(x)+\varepsilon$ with a centered noise~$\varepsilon$.

We assume throughout the paper that the operator $A$ is injective.


\begin{assumption}[The true solution]\label{fp}
The conditional expectation w.r.t.~$\rho$ of~$y$ given~$x$ exists (a.s.), and there exists~$\fp \in \textnormal{int}(\mathcal{D}(A)) \subset\HH~$ such that
  \begin{equation*}
\int_Y y d\rho(y|x) = \gp(x) = A(\fp)(x), \text{ for all } x\in X.
  \end{equation*}
\end{assumption} 

From~\eqref{emp.prop} we observe that~$\fp$ is the minimizer of the  expected risk. The element~$\fp$ is the true solution which we aim at estimating.

\begin{assumption}[Noise condition]\label{noise.cond}
There exist some constants~$M,\Sigma$ such that for almost all~$x\in X$,
\begin{equation*}
\int_Y\left(e^{\norm{y-A(\fp)(x)}_Y/M}-\frac{\norm{y-A(\fp)(x)}_Y}{M}-1\right)d\rho(y|x)\leq\frac{\Sigma^2}{2M^2}.
\end{equation*}
\end{assumption}

This Assumption is usually referred to as a \emph{Bernstein-type assumption}. The distribution of the observational noise reflects in terms of the parameters $M > 0$, $\Sigma > 0$. For the convergence analysis, the output space need not be bounded as long as the noise condition for the output variable is fulfilled.

We need the assumption on the nonlinearity structure of operator $A$ to establish the rates of convergence. Following the work of Engl et al.~ \cite[Chapt.~10]{Engl},~\cite{Hohage} on `classical' nonlinear inverse problems, we consider the following assumption:
\begin{assumption}[nonlinearity structure]\label{A.assumption}
\begin{enumerate}[(i)]
  \item~$\mathcal{D}(A)$ is convex,~$A:\mathcal{D}(A)\cap \dl \to \HH'$ is weakly sequentially closed and~$A$ is Fr\'{e}chet differentiable with derivative~$A':\mathcal{D}(A)\to L(\HH,\HH')$.

\item the Fr{\'e}chet derivative~$A'(f)$ is bounded in a ball of sufficiently large radius~$d$, i.e., there exists
$J < \infty$ such that
\begin{equation*}
\norm{A'(f)}_{\HH\to\HH'}\leq J\qquad\forall f \in \mathcal{B}_d(\fp ) \cap \mathcal{D}(A) \subset \HH,
\end{equation*}
      
  \item (Link condition) There exists constants~$p>0$ and~$\al,\beta>0$ such that for all~$g\in \HH$,
   \begin{equation*}
  \al\|g\|_{\HH_{-p}}\leq \|\ip  A'(\fp)g\|_{\LL}\leq \beta\|g\|_{\HH_{-p}}.
   \end{equation*}
  
  \item (Lipschitz continuity of $A'$) For all~$f\in \mathcal{D}(A)\cap\dl~$, there exists a constant~$\gamma$ such that
~$$\|\ip  \brac{A'(\fp)-A'(f)}\|_{ \HH_{-p}\to\LL}\leq \gamma\|\fp-f\|_{\HH}\leq \frac{\al^2}{2\beta}.$$
\end{enumerate}
\end{assumption}

A sufficient condition for weak sequential closedness is that $\mathcal{D}(A)$ is weakly closed (e.g. closed and convex) and $A$ is weakly continuous. The link condition (Assumption~\ref{A.assumption} (iii)) is an interplay between the operator~$L^{-1}$ and the Fr\'echet derivative of the operator~$A$. This link condition is known as finitely smoothing. This condition is satisfied in various types of  problems (for examples see~\cite[Example~10.2]{Bottcher2006},~\cite[Example~4,~5]{Werner2020}). 

\subsection{Effective dimension}
Now we introduce the concept of the effective dimension which is an important ingredient to derive the rates of convergence rates~\cite{Blanchard,Caponnetto,Guo,Lin2018,Lu2020,Rastogi}.  The effective dimension is defined as
$$\mathcal{N}(\la):=Tr\left((\tp+\la I)^{-1}\tp\right), \text{  for }\la>0.$$
Using the singular value decomposition~$\tp=\sum\limits_{i=1}^\infty t_i\langle\cdot,e_i\rangle_{\HH'} e_i$ for an orthonormal sequence~$(e_i)_{i\in \NN}$ of eigenvectors of~$\tp$ with corresponding eigenvalues~$(t_i)_{i\in\NN}$ such that~$t_1\geq t_2\geq\ldots\geq 0$, we get
$$\mathcal{N}(\la)=\sum\limits_{i=1}^\infty \frac{t_i}{\la+t_i}.$$
Hence the function~$\la\to \mathcal{N}(\la)$ is continuous and decreasing from~$\infty$ to zero for~$0 < \la < \infty$ for the infinite-dimensional operator~$\tp$ (see for details~\cite{Blanchard2012,Blanchard2020,Lin2015,Lu2020,Zhang}). 

Since the integral operator~$\tp$ is a trace class operator, the effective dimension is finite and we have that
\begin{equation}\label{Nl.bd}
\mathcal{N}(\la)\leq \norm{(\tp+\la I)^{-1}}_{\mathcal{L}(\HH')}Tr\left(\tp\right) \leq \frac{\kappa^2}{\la}.
\end{equation}

\begin{assumption}[Polynomial decay condition]\label{N(l).bound}
Assume that there exists some positive constant~$c>0$ such that
\begin{equation*}
\mathcal{N}(\la) \leq c\la^{-b},\qquad\text{ for }\quad b<1,\quad\forall \quad\la>0.
\end{equation*}
\end{assumption}

\begin{assumption}[Logarithmic decay condition]\label{log.decay} Assume that there exists some positive constant~$c>0$ such that
\begin{equation*}
\mathcal{N}(\la)\leq c\log\left(\frac{1}{\la}\right),\qquad\forall \quad\la>0.
\end{equation*}
\end{assumption}

Lu et al.~\cite{Lu2020} showed that different kernels with some probability measures show different behavior of the effective dimension. For Gaussian kernel~$K_1(x,x') = xx' + e^{-8(x-x')^2}$ with the uniform sampling on~$[0,1]$, the effective dimension exhibits the log-type behavior (Assumption~\ref{log.decay}), on the other hand, the kernel~$K_2(x,x') = \min\{x,x'\}-xt$ exhibits the power-type behavior (Assumption~\ref{N(l).bound}).

Caponnetto et al~\cite{Caponnetto} showed that if the eigenvalues~$t_n$'s of the integral operator~$\tp$ follow the polynomial decay: i.e., for fixed positive constants~$\mu$ and~$b<1$,
$$t_n\leq\mu n^{-\frac{1}{b}}\quad\forall n\in\NN,$$ 
then the effective dimension behaves like power-type function (Assumption~\ref{N(l).bound}). 


\section{Convergence analysis}\label{Sec-analysis}

Here we establish the error bounds for the Tikhonov regularization for the nonlinear statistical inverse problems in the~$\HH$-norm in the probabilistic sense. The explicit expression of~$f_{\zz,\la}$ is not known, therefore we use the definition~\eqref{Tikhonov} of the Tikhonov estimator~$f_{\zz,\la}$ to derive the error estimates. The linearization techniques is used for nonlinear operator~$A$ in the neighborhood of the true solution~$\fp$. The rates of convergence are established by exploiting the nonlinearity structure of operator $A$ (see~Assumption~\ref{A.assumption}). We discuss the rates of convergence for the Tikhonov estimator by measuring the effect of random sampling which is governed by the noise condition (Assumption~\ref{noise.cond}).  The bounds of the reconstruction error depend on the effective dimension, the smoothness parameter $q$ of the true solution and the parameter $p$ related to the link condition.  


It is convenient to introduce the ``standardized” quantities used in our analysis. Here we introduce shorthand notation for some key quantities. We let
$$\Xi_{\xx}:=\sx (\tx +\la I)^{-1}\sx^*,$$
$$\Delta_{\zz}:=\sx A(\fp )-\yy,$$
$$\Theta_{\zz}:=\norm{(\tp +\la I)^{-1/2}\sx^*\brac{\sx A(\fp )-\yy}}_{\HH'},$$
$$\Psi_\xx:=\norm{(\tp +\la I)^{-1/2}(\tp -\tx  )}_{\mathcal{L}(\HH')}$$ 
and
$$\Gamma_{\xx}:=\norm{(\tx  +\la I)^{-1/2}(\tp +\la I)^{1/2}}_{\mathcal{L}(\HH')}.$$ 

The error bound discussed in the following theorem holds non-asymptotically, but this holds with the following choice of the regularization parameter~$\la$ and sample size~$m$. We can choose appropriate regularization parameter~$\la$ and sample size~$m$ such that the following holds:
\begin{equation}\label{l.la.condition}
\mathcal{N}(\la) \leq m\la\qquad \text{and}\qquad \la\leq \min\paren{1,\norm{\tp}_{\mathcal{L}(\HH')}}.
\end{equation}

The condition (\ref{l.la.condition}) says that as the regularization parameter $\la$ decreases, the sample size $m$ must increase.

\begin{theorem}\label{err.upper.bound.p}
Let~$\zz$ be i.i.d. samples drawn according to the probability measure~$\rho$.  If Assumptions \ref{assmpt1}--\ref{A.assumption}  and \eqref{l.la.condition} hold true and if~$\fp -\fbar\in \HH_q$ for some~$q \in[1,2 + p]$. Then, for the Tikhonov estimator~$\fz~$ in (\ref{Tikhonov}) with the a-priori choice of the regularization parameter~$\la=\Theta_{\mathcal{N},p,q}^{-1}\paren{\frac{1}{\sqrt{m}}}$ for~$\Theta_{\mathcal{N},p,q}(\la)=\frac{\la^{\frac{p+q}{2(p+1)}}}{\sqrt{\mathcal{N}(\la)}}$, for all~$0<\eta<1$, the following error bound holds with the confidence~$1-\eta$:
$$\|\fz -\fp\|_{\HH}=\mathcal{O}\paren{\paren{\Theta_{\mathcal{N},p,q}^{-1}\paren{\frac{1}{\sqrt{m}}}}^{r}\log^2\paren{\frac{4}{\eta}}}\qquad {for}\quad r=\frac{q}{2(p+1)}.$$
\end{theorem}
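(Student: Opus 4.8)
The plan is to exploit the minimizing property of $\fz$ directly, combined with the linearization of $A$ around $\fp$ afforded by Assumption~\ref{A.assumption}. Since $\fz$ minimizes the Tikhonov functional, comparing its value at $\fz$ with its value at $\fp$ yields the basic inequality
\begin{equation*}
\frac{1}{m}\sum_{i=1}^m\norm{A(\fz)(x_i)-y_i}_Y^2+\la\norm{L(\fz-\fbar)}_{\HH}^2\leq \frac{1}{m}\sum_{i=1}^m\norm{A(\fp)(x_i)-y_i}_Y^2+\la\norm{L(\fp-\fbar)}_{\HH}^2.
\end{equation*}
Rewriting in terms of the sampling operator $\sx$ and $\Delta_{\zz}=\sx A(\fp)-\yy$, expanding the quadratic, and using the link condition to pass between $\norm{\ip A'(\fp)(\cdot)}_{\LL}$-type quantities and Hilbert-scale norms $\norm{\cdot}_{\HH_{-p}}$, one isolates a term controlling $\norm{L(\fz-\fbar)}_{\HH}$ (hence $\norm{\fz-\fbar}_{\HH_1}$) against a data-misfit term and a cross term involving $\Delta_{\zz}$. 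The Lipschitz bound on $A'$ (Assumption~\ref{A.assumption}(iv)), with its crucial smallness constraint $\gamma\norm{\fp-f}_{\HH}\leq\al^2/(2\beta)$, is what lets the linearization remainder be absorbed; one must first verify a-posteriori that $\fz$ stays in the ball $\mathcal{B}_d(\fp)$ where all these assumptions apply, which is a standard bootstrap using the smallness of the remaining terms.

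Next I would convert the empirical (sampling) quantities into their population counterparts. This is where the operators $\Gamma_{\xx}$, $\Psi_{\xx}$, $\Xi_{\xx}$ and the scalar $\Theta_{\zz}$ enter: the plan is to write $\norm{\fz-\fp}_{\HH}$ (more precisely its image under $(\tp+\la I)^{1/2}$-type weights, to make the effective dimension appear) in terms of $\Gamma_{\xx}\Theta_{\zz}$ for the noise/variance part, $\Psi_{\xx}$ for the operator-perturbation part, and $\la^{r}$-type deterministic terms for the approximation (bias) part coming from the source condition $\fp-\fbar\in\HH_q$. The interpolation inequality~\eqref{interpolation} is the tool to trade the $\HH_1$-control obtained from the penalty against the $\HH_{-p}$-control obtained from the link condition, yielding an $\HH$-norm bound with the exponent $r=\frac{q}{2(p+1)}$; the bookkeeping here is routine but the exponent arithmetic (splitting $q\in[1,2+p]$ via the three scales $\HH_{-p}\subset\HH\subset\HH_1$) is the heart of why this particular rate emerges.

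The probabilistic content is then entirely contained in bounding the random quantities $\Theta_{\zz}$, $\Psi_{\xx}$, $\Gamma_{\xx}$ with confidence $1-\eta$. I would invoke the concentration inequalities collected in the Appendix: a Bernstein-type inequality for Hilbert-space-valued random variables gives $\Theta_{\zz}\lesssim \paren{\frac{M}{m\sqrt{\la}}+\sqrt{\frac{\Sigma^2\mathcal{N}(\la)}{m}}}\log\frac{2}{\eta}$ using the noise condition (Assumption~\ref{noise.cond}) and the definition of $\mathcal{N}(\la)$, while an operator Bernstein inequality gives $\Psi_{\xx}\lesssim\paren{\frac{\kappa^2}{m\sqrt{\la}}+\sqrt{\frac{\kappa^2\mathcal{N}(\la)}{m}}}\log\frac{2}{\eta}$; under~\eqref{l.la.condition} the first (fast) terms are dominated by the second, and $\Psi_{\xx}\leq 1/2$ on this event forces $\Gamma_{\xx}\leq\sqrt{2}$ via a Neumann-series argument. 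Feeding these bounds back, the variance term is of order $\sqrt{\mathcal{N}(\la)/m}$ after the weights, and balancing it against the bias term $\la^{\frac{p+q}{2(p+1)}}$ is exactly the equation $\Theta_{\mathcal{N},p,q}(\la)=\frac{\la^{(p+q)/(2(p+1))}}{\sqrt{\mathcal{N}(\la)}}=\frac{1}{\sqrt m}$ defining the a-priori parameter choice, after which raising everything to the appropriate power produces $\paren{\Theta_{\mathcal{N},p,q}^{-1}(m^{-1/2})}^{r}$. Collecting the two $\log\frac{2}{\eta}$ factors (one from $\Theta_{\zz}$, one from $\Psi_{\xx}$) and adjusting constants gives the $\log^2\frac{4}{\eta}$ in the statement.

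The main obstacle I anticipate is the a-posteriori verification that the linearization is legitimate — i.e.\ closing the loop that $\fz\in\mathcal{B}_d(\fp)\cap\dn$ so that Assumption~\ref{A.assumption}(ii)--(iv) may be applied, and simultaneously that the Lipschitz remainder terms are genuinely absorbable rather than merely formally small. This requires a careful self-improving estimate: one first derives a crude bound on $\norm{\fz-\fp}_{\HH}$ valid under a provisional assumption that $\fz$ lies in the ball, checks this bound is smaller than $d$ for $m$ large (or, more honestly, uses the structure of~\eqref{Tikhonov} to show the minimizer cannot leave the ball), and only then extracts the sharp rate. The interplay between the nonlinearity constant $\gamma$, the link-condition constants $\al,\beta$, and the random fluctuations is delicate, and getting the constants to line up so that the absorbed terms leave a clean $\frac12\la\norm{L(\fz-\fbar)}_{\HH}^2$ on the left is the step most likely to require the precise form of the smallness hypothesis $\gamma\norm{\fp-f}_{\HH}\leq\al^2/(2\beta)$.
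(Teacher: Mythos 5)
Your proposal follows essentially the same route as the paper's proof: comparison of the Tikhonov functional at $\fz$ and $\fp$, linearization at $\fp$ with the remainder absorbed using the smallness built into Assumption~\ref{A.assumption}(iv), the link condition to pass to the $\HH_{-p}$-norm, interpolation between $\HH_{-p}$ and $\HH_1$ producing the exponent $r=\frac{q}{2(p+1)}$, and concentration bounds on $\Theta_{\zz}$, $\Psi_\xx$, $\Gamma_{\xx}$ balanced through $\Theta_{\mathcal{N},p,q}(\la)=\frac{1}{\sqrt m}$. The only minor deviations are that the paper controls $\Gamma_{\xx}$ by a $\log\paren{\frac{2}{\eta}}$-factor (Proposition~\ref{I1}) rather than by a Neumann-series constant, and the a-posteriori ball/bootstrap verification you anticipate is not carried out there because the Lipschitz/smallness condition in Assumption~\ref{A.assumption}(iv) is imposed globally on $\mathcal{D}(A)\cap\dl$.
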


\begin{proof}

By the definition of~$\fz~$ as the solution of minimization problem in~\eqref{Tikhonov}, we have
\begin{equation*}
\frac{1}{m}\sum\limits_{i=1}^m\|A(\fz )(x_i)-y_i\|_Y^2+\la \|L(\fz -\fbar)\|_{\HH}^2\leq \frac{1}{m}\sum\limits_{i=1}^m\|A(\fp)(x_i)-y_i\|_Y^2+\la \|L(\fp-\fbar)\|_{\HH}^2
\end{equation*}
which implies
\begin{equation}\label{idea}
\|\sx  A(\fz )-\yy\|_m^2+\la \|L(\fz -\fbar)\|_{\HH}^2\leq \|\sx  A(\fp )-\yy\|_m^2+\la \|L(\fp-\fbar)\|_{\HH}^2.
\end{equation}


By linearizing the nonlinear operator~$A$ at~$\fz$ we get 
\begin{equation}\label{Taylor_exp_Rla}
A(\fz )=A(\fp )+A'(\fp )(\fz -\fp )+r(\fz )
\end{equation}
where~$r(\fz)$ is the error term by linearizing the operator~$A$ at true solution~$\fp$. Using this we reexpress the inequality (\ref{idea}) as follows,
\begin{equation*}
\|\sx A'(\fp)(\fz -\fp)+\Delta_{\zz}+\sx  r(\fz)\|_m^2+\la \|L(\fz -\fbar)\|_{\HH}^2\leq \|\Delta_{\zz}\|_m^2+\la \|L(\fp-\fbar)\|_{\HH}^2.
\end{equation*}

Then we have,
\begin{align*}
&\|\sx A'(\fp)(\fz -\fp)\|_m^2+\norm{\Delta_{\zz}+\sx  r(\fz)}_{m}^2+2\langle \sx A'(\fp)(\fz -\fp),\Delta_{\zz}+\sx  r(\fz )\rangle_m   \\  \nonumber
&+\la\|L(\fz -\fp)\|_{\HH}^2 \leq 2\la\langle L(\fp-\fz ),L(\fp-\fbar)\rangle_{\HH}+\norm{\Delta_{\zz}}_{m}^2
\end{align*}
which implies
\begin{align}\label{B1}
&\norm{\ip A'(\fp)(\fz-\fp)}_{\LL}^2+\la\norm{L(\fz-\fp)}_{\HH}^2 \\ \nonumber
\leq  & 2\la\norm{\fz-\fp}_{\HH_{2-q}}\norm{\fp-\fbar}_{\HH_{q}}+\norm{\Delta_{\zz}}_{m}^2-\norm{\Delta_{\zz}+\sx  r(\fz)}_{m}^2  \\  \nonumber
&+\inner{A'(\fp)(\fz-\fp),(\tp -\tx  )A'(\fp)(\fz-\fp)}_{\HH'}-2\inner{A'(\fp)(\fz-\fp),\sx ^*\brac{\Delta_{\zz}+\sx  r(\fz)}}_{\HH'}.
\end{align}

Now with Assumption~\ref{A.assumption} and~\eqref{L.op} from Lemmas~\ref{R_la}--\ref{B3} we obtain,
\begin{align*}
&\al^2\norm{\fz-\fp}_{\HH_{-p}}^2+\la\norm{L(\fz-\fp)}_{\HH}^2  \\
\leq & \delta_1+\sqrt{\la}\delta_2\norm{L(\fz-\fp)}_{\HH}+\delta_3\norm{\fz-\fp}_{\HH_{-p}}+\beta\gamma\norm{\fz-\fp}_{\HH}\norm{\fz-\fp}_{\HH_{-p}}^2\\
&+2\la\norm{\fz-\fp}_{\HH_{2-q}}\norm{\fp-\fbar}_{\HH_{q}},
\end{align*}
where~$\delta_1=\Gamma_{\xx}^2\Theta_{\zz}^2,~\delta_2=\ell\paren{2J\Theta_{\zz}+4J\Gamma_{\xx}\Theta_{\zz}+5J^2\Psi_\xx\norm{\fz-\fp}_{\HH}}$ 
and~$\delta_3=2\beta \Theta_{\zz}+5\beta J\Psi_\xx\norm{\fz-\fp}_{\HH}$.

Under the condition (iii) of Assumption \ref{A.assumption} using the interpolation inequality~\eqref{interpolation}, we obtain
\begin{align}\label{final.eq}
&\frac{\al^2}{2} \norm{\fz -\fp}_{\HH_{-p}}^2+\la\norm{L(\fz -\fp)}_{\HH}^2 \\ \nonumber
\leq & \delta_1+\sqrt{\la}\delta_2\norm{L(\fz-\fp)}_{\HH}+\delta_3\norm{\fz -\fp}_{\HH_{-p}} 
+2\la\norm{\fp-\fbar}_{\HH_{q}}\norm{\fz-\fp}_{\HH_{-p}}^{\frac{q-1}{p+1}}\norm{L(\fz-\fp)}_{\HH}^{\frac{p-q+2}{p+1}}
\end{align}
which can be re-expressed as
\begin{align}\label{f.eq}
&\norm{\fz-\fp}_{\HH_{-p}}^2\\   \nonumber
=&\mathcal{O}\paren{\delta_1+\sqrt{\la}\delta_2\norm{L(\fz-\fp)}_{\HH}+\delta_3\norm{\fz -\fp}_{\HH_{-p}}+\la\norm{\fz-\fp}_{\HH_{-p}}^{\frac{q-1}{p+1}}\norm{L(\fz-\fp)}_{\HH}^{\frac{p-q+2}{p+1}}}.
\end{align}

In the analysis, we will make repeated use of the following:
\begin{equation}\label{imply}
c^r\leq e+dc^t \Rightarrow c^r=\mathcal{O}\paren{e+d^{\frac{r}{r-t}}}
\end{equation}
which holds for~$0 \leq t < r$ and~$c, d, e > 0$. 

We apply this inequality to the estimate~\eqref{f.eq} for~$c = \norm{\fz-\fp}_{\HH_{-p}}$ and~$r = 2$. First we take~$t = 1$,~$d = \delta_3$ and
$e = \delta_1+\sqrt{\la}\delta_2\norm{L(\fz-\fp)}_{\HH}+\la\norm{\fz-\fp}_{\HH_{-p}}^{\frac{q-1}{p+1}}\norm{L(\fz-\fp)}_{\HH}^{\frac{p-q+2}{p+1}}$ and we obtain
\begin{equation*}
\norm{\fz-\fp}_{\HH_{-p}}^2=\mathcal{O}\paren{\delta_1+\delta_3^2+\sqrt{\la}\delta_2\norm{L(\fz-\fp)}_{\HH}+\la\norm{\fz-\fp}_{\HH_{-p}}^{\frac{q-1}{p+1}}\norm{L(\fz-\fp)}_{\HH}^{\frac{p-q+2}{p+1}}}.
\end{equation*}
Then we choose~$t = \frac{q-1}{p+1}$,~$d = \la\norm{L(\fz-\fp)}_{\HH}^\frac{p-q+2}{p+1}$ and~$e = \delta_1+\delta_3^2+\sqrt{\la}\delta_2\norm{L(\fz-\fp)}_{\HH}$ and we get
\begin{equation}\label{ps}
\norm{\fz-\fp}_{\HH_{-p}}^2=\mathcal{O}\paren{\delta_4^2+\sqrt{\la}\delta_2\norm{L(\fz-\fp)}_{\HH}+\la^{\frac{2(p+1)}{2p-q+3}}\norm{L(\fz-\fp)}_{\HH}^{\frac{2(p-q+2)}{2p-q+3}}},
\end{equation}
where~$\delta_4^2=\delta_1+\delta_3^2$.

Replacing the term that contains~$\norm{\fz-\fp}_{\HH_{-p}}$ on the right-hand side in~\eqref{final.eq} and using the inequality~$(x + y)^r \leq x^r + y^r$ for~$0 \leq r \leq 1$ we obtain
\begin{align*}
\norm{L(\fz-\fp)}_{\HH}^2 = & \mathcal{O}\left(\frac{\delta_1}{\la}+\frac{\delta_3\delta_4}{\la}+\frac{\delta_2}{\sqrt{\la}}\norm{L(\fz-\fp)}_{\HH}+\frac{\delta_2^{\frac{1}{2}}\delta_3}{\la^{\frac{3}{4}}}\norm{L(\fz-\fp)}_{\HH}^{\frac{1}{2}}\right.\\
&+\la^{\frac{q-1}{4(p+1)}}\delta_2^{\frac{q-1}{2(p+1)}}\norm{L(\fz-\fp)}_{\HH}^{\frac{2p-q+3}{2(p+1)}}+\delta_3\la^{\frac{q-p-2}{2p-q+3}}\norm{L(\fz-\fp)}_{\HH}^{\frac{p-q+2}{2p-q+3}}  \\  \nonumber
&\left. +\delta_4^{\frac{q-1}{p+1}}\norm{L(\fz-\fp)}_{\HH}^{\frac{p-q+2}{p+1}} +\la^{\frac{q-1}{2p-q+3}}\norm{L(\fz-\fp)}_{\HH}^{\frac{2(p-q+2)}{2p-q+3}}\right).
\end{align*}

Applying (\ref{imply}) repeatedly for~$c = \norm{L(\fz-\fp)}_{\HH}^2$,~$r = 1$ and~$t=\frac{1}{2}$,~$t=\frac{1}{4}$,~$t=\frac{2p-q+3}{4(p+1)}$,~$t =\frac{p-q+2}{2(2p-q+3)}$,~$t=\frac{p-q+2}{2(p+1)}$
and~$t = \frac{p-q+2}{2p-q+3}$ we obtain
\begin{align*}
\norm{L(\fz-\fp)}_{\HH}^2 =\mathcal{O}&\left(  \frac{1}{\la}\brac{\delta_1+\delta_3\delta_4+\delta_2^2+\delta_2^{\frac{2}{3}}\delta_3^{\frac{4}{3}}}+\la^{\frac{q-1}{2p+q+1}}\delta_2^{\frac{2(q-1)}{2p+q+1}}+\la^{\frac{2(q-p-2)}{3p-q+4}}\delta_3^{\frac{2(2p-q+3)}{3p-q+4}}\right.\\
&\left.+\delta_4^{\frac{2(q-1)}{p+q}}+\la^{\frac{q-1}{p+1}}\right),
\end{align*}

\begin{equation}\label{err1}
\norm{L(\fz-\fp)}_{\HH}^2=\mathcal{O}\paren{\frac{\delta^2}{\la}+\la^{\frac{q-1}{2p+q+1}}\delta^{\frac{2(q-1)}{2p+q+1}}+\la^{\frac{2(q-p-2)}{3p-q+4}}\delta^{\frac{2(2p-q+3)}{3p-q+4}}+\delta^{\frac{2(q-1)}{q+p}}+\la^{\frac{q-1}{p+1}}},
\end{equation}
where~$\delta^2=\Psi_\xx^2+\Theta_{\zz}^2+\Gamma_{\xx}^2\Theta_{\zz}^2$. 

Under the condition~\eqref{l.la.condition}, from Propositions~\ref{main.bound}--\ref{I1} we get with the probability~$1-\eta$,
\begin{equation}
\delta= \mathcal{O}\paren{\brac{\frac{1}{m\sqrt{\la}}+\sqrt{\frac{\mathcal{N}(\la)}{m}}}\log^2\paren{\frac{4}{\eta}}}.
\end{equation}

Under the condition~\eqref{l.la.condition} the spectral decomposition of the operator~$\tp~$ gives 
\begin{equation}
\mathcal{N}(\la)\geq \frac{\norm{\tp }_{\mathcal{L}(\HH')}}{\la+\norm{\tp }_{\mathcal{L}(\HH')}} \geq \frac{1}{2} \qquad \text{for}\quad \la\leq \norm{\tp }_{\mathcal{L}(\HH')}.
\end{equation}

From~\eqref{l.la.condition} we get
\begin{equation}
\frac{1}{m\sqrt{\la}}\leq \frac{2\mathcal{N}(\la)}{m\sqrt{\la}} \leq 2\sqrt{\frac{\mathcal{N}(\la)}{m}}.
\end{equation} 

Hence we get,
\begin{equation}
\delta= \mathcal{O}\paren{\sqrt{\frac{\mathcal{N}(\la)}{m}}\log^2\paren{\frac{4}{\eta}}}.
\end{equation}

By balancing the error terms in~\eqref{err1}, we consider the parameter choice~$\la=\Theta_{\mathcal{N},p,q}^{-1}\paren{\frac{1}{\sqrt{m}}}$ for~$\Theta_{\mathcal{N},p,q}(\la)=\frac{\la^{\frac{p+q}{2(p+1)}}}{\sqrt{\mathcal{N}(\la)}}$. We have with the probability~$1-\eta$,
\begin{equation*}
\norm{L(\fz-\fp)}_{\HH}=\mathcal{O}\paren{\la^{\frac{q-1}{2(p+1)}}\log^2\paren{\frac{4}{\eta}}}=\mathcal{O}\paren{\paren{\Theta_{\mathcal{N},p,q}^{-1}\paren{\frac{1}{\sqrt{m}}}}^{\frac{q-1}{2(p+1)}}\log^2\paren{\frac{4}{\eta}}}.
\end{equation*}

Form~\eqref{ps} we get with the probability~$1-\eta$,
\begin{align*}
\norm{\fz-\fp}_{\HH_{-p}}^2 =& \mathcal{O}\paren{\delta^2+\sqrt{\la}\delta\norm{L(\fz-\fp)}_{\HH}+\la^{\frac{2(p+1)}{2p-q+3}}\norm{L(\fz-\fp)}_{\HH}^{\frac{2(p-q+2)}{2p-q+3}}}\\
= & \mathcal{O}\paren{\la^{\frac{p+q}{p+1}}\log^2\paren{\frac{4}{\eta}}}=\mathcal{O}\paren{\paren{\Theta_{\mathcal{N},p,q}^{-1}\paren{\frac{1}{\sqrt{m}}}}^{\frac{p+q}{p+1}}\log^2\paren{\frac{4}{\eta}}}.  
\end{align*}

Taking the mean using the inequality (\ref{interpolation}) we get with the probability~$1-\eta$,
\begin{align*}
\norm{\fz -\fp }_{\HH}\leq & \norm{\fz -\fp }^{\frac{1}{p+1}}_{\HH_{-p}} \norm{L(\fz -\fp )}^{\frac{p}{p+1}}_{\HH}
=\mathcal{O}\paren{\la^{\frac{p+q}{2(p+1)^2}}\la^{\frac{p(q-1)}{2(p+1)^2}}\log^2\paren{\frac{4}{\eta}}}\\
= & \mathcal{O}\paren{\la^{\frac{q}{2(p+1)}}\log^2\paren{\frac{4}{\eta}}}=\mathcal{O}\paren{\paren{\Theta_{\mathcal{N},p,q}^{-1}\paren{\frac{1}{\sqrt{m}}}}^{\frac{q}{2(p+1)}}\log^2\paren{\frac{4}{\eta}}}.
\end{align*}

Now, we obtain the desired result.
\end{proof}
Here we observe that the condition~\eqref{l.la.condition} is automatically satisfied under the a-priori choice of the regularization parameter~$\la=\Theta_{\mathcal{N},p,q}^{-1}\paren{\frac{1}{\sqrt{m}}}$ for~$\Theta_{\mathcal{N},p,q}(\la)=\frac{\la^{\frac{p+q}{2(p+1)}}}{\sqrt{\mathcal{N}(\la)}}$.

Using the trivial bound \eqref{Nl.bd} for the effective dimension we get the following corollary.
\begin{corollary}\label{err.upper.bound.para}
Under the same assumptions of Theorem \ref{err.upper.bound.p} with a-priori choice of the regularization parameter~$\la=m^{-\frac{p+1}{2p+q+1}}$, for all~$0<\eta<1$, the following error estimate holds with confidence~$1-\eta$:
$$||\fz -\fp||_\HH = \mathcal{O}\paren{m^{-\frac{q}{2(2p+q+1)}}\log^2\left(\frac{4}{\eta}\right)}.$$
\end{corollary}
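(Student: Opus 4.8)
The plan is to derive Corollary~\ref{err.upper.bound.para} directly from Theorem~\ref{err.upper.bound.p} by substituting the crude bound \eqref{Nl.bd}, namely $\mathcal{N}(\la)\leq \kappa^2/\la$, into the implicit parameter choice $\la=\Theta_{\mathcal{N},p,q}^{-1}\paren{\frac{1}{\sqrt m}}$. First I would make the implicit equation explicit: the defining relation $\Theta_{\mathcal{N},p,q}(\la)=\frac{\la^{\frac{p+q}{2(p+1)}}}{\sqrt{\mathcal{N}(\la)}}=\frac{1}{\sqrt m}$ becomes, after inserting $\mathcal{N}(\la)\leq\kappa^2\la^{-1}$, the (in)equality $\frac{\la^{\frac{p+q}{2(p+1)}}}{\kappa\la^{-1/2}}\leq\frac{1}{\sqrt m}$, i.e. $\kappa^{-1}\la^{\frac{p+q}{2(p+1)}+\frac12}\leq m^{-1/2}$. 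Computing the exponent, $\frac{p+q}{2(p+1)}+\frac12=\frac{p+q+p+1}{2(p+1)}=\frac{2p+q+1}{2(p+1)}$, so the condition reads $\la^{\frac{2p+q+1}{2(p+1)}}\leq \kappa\, m^{-1/2}$, which up to the constant $\kappa$ is exactly $\la= m^{-\frac{p+1}{2p+q+1}}$. Thus the stated choice of $\la$ is (a constant multiple of) an admissible value making $\Theta_{\mathcal{N},p,q}(\la)\le\frac{1}{\sqrt m}$, and since $\Theta_{\mathcal{N},p,q}$ is increasing in $\la$ (the numerator grows and, using the monotonicity of $\mathcal{N}$, the denominator shrinks as $\la$ grows), this means $\la\le\Theta_{\mathcal{N},p,q}^{-1}(\tfrac{1}{\sqrt m})$ up to constants.

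Next I would feed this back into the rate. Theorem~\ref{err.upper.bound.p} gives $\|\fz-\fp\|_\HH=\mathcal{O}\!\big(\la^{\,r}\log^2(4/\eta)\big)$ with $r=\frac{q}{2(p+1)}$ once $\la$ is the a-priori value; since the bound is monotone increasing in $\la$ and our explicit $\la=m^{-\frac{p+1}{2p+q+1}}$ dominates (up to constants) the implicit one, substituting it yields
\begin{equation*}
\|\fz-\fp\|_\HH=\mathcal{O}\paren{\paren{m^{-\frac{p+1}{2p+q+1}}}^{\frac{q}{2(p+1)}}\log^2\paren{\frac{4}{\eta}}}=\mathcal{O}\paren{m^{-\frac{q}{2(2p+q+1)}}\log^2\paren{\frac{4}{\eta}}},
\end{equation*}
which is the claimed estimate. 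I would also remark, as the paper does after the theorem, that the admissibility condition \eqref{l.la.condition} is automatically met: with $\mathcal{N}(\la)\le\kappa^2\la^{-1}$ one checks $\mathcal{N}(\la)\le m\la$ reduces to $\la^2\ge\kappa^2/m$, i.e. $\la\ge\kappa m^{-1/2}$, and since $\frac{p+1}{2p+q+1}\le\frac12$ for $q\ge1$ the choice $\la=m^{-\frac{p+1}{2p+q+1}}$ indeed satisfies $\la\ge m^{-1/2}\gtrsim\kappa m^{-1/2}$ for $m$ large; also $\la\le1$ trivially, and $\la\le\norm{\tp}_{\mathcal{L}(\HH')}$ for $m$ large.

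The only genuine subtlety — and hence the step I would be most careful about — is the direction of the inequality when replacing $\mathcal{N}(\la)$ by its upper bound $\kappa^2/\la$: one must verify that enlarging the denominator in $\Theta_{\mathcal{N},p,q}$ only \emph{decreases} the function, so that the resulting explicit $\la$ is an over-estimate (larger than or comparable to the true a-priori $\la$) rather than an under-estimate, and that the final error bound, being increasing in $\la$, is therefore still valid (only possibly not optimal). This monotonicity bookkeeping, together with absorbing the constant $\kappa$ into the $\mathcal{O}$-notation, is the entire content of the proof; everything else is the arithmetic of exponents already displayed above. No new probabilistic or functional-analytic input is needed beyond Theorem~\ref{err.upper.bound.p} and the elementary bound \eqref{Nl.bd}.
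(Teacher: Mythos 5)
Your exponent arithmetic is exactly the computation the paper intends, and your check of \eqref{l.la.condition} is the right supporting detail, but the step you use to conclude is not valid as stated. Theorem \ref{err.upper.bound.p} asserts a bound only for the estimator computed with the implicit parameter $\la_{\mathrm{impl}}=\Theta_{\mathcal{N},p,q}^{-1}(1/\sqrt m)$, whereas the corollary concerns the \emph{different} estimator $\fz$ computed with the explicit parameter $\la_*=m^{-\frac{p+1}{2p+q+1}}$. Knowing $\la_{\mathrm{impl}}\lesssim\la_*$ and that the bound formula $\la\mapsto\la^{q/(2(p+1))}$ is increasing only lets you enlarge the bound for $f_{\zz,\la_{\mathrm{impl}}}$; it says nothing about $\norm{f_{\zz,\la_*}-\fp}_\HH$, because the reconstruction error itself is not a monotone function of $\la$ --- only the derived bound is. (There is also a directional slip in your first paragraph: $\mathcal{N}(\la)\le\kappa^2/\la$ yields a \emph{lower} bound $\Theta_{\mathcal{N},p,q}(\la)\ge\la^{\frac{2p+q+1}{2(p+1)}}/\kappa$, hence $\Theta_{\mathcal{N},p,q}(\la_*)\gtrsim 1/\sqrt m$, not $\Theta_{\mathcal{N},p,q}(\la_*)\le 1/\sqrt m$; your second paragraph then uses the correct direction $\la_{\mathrm{impl}}\lesssim\la_*$.)

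The repair is short and is what the paper's remark ``using the trivial bound \eqref{Nl.bd}'' means: reuse the intermediate estimates of the theorem's \emph{proof}, which hold for every $\la$ satisfying \eqref{l.la.condition}, namely \eqref{err1} and \eqref{ps} together with $\delta=\mathcal{O}\big(\sqrt{\mathcal{N}(\la)/m}\,\log^2(4/\eta)\big)$. With $\mathcal{N}(\la)\le\kappa^2/\la$ and $\la=\la_*$ this gives $\delta=\mathcal{O}\big((m\la_*)^{-1/2}\log^2(4/\eta)\big)=\mathcal{O}\big(\la_*^{\frac{p+q}{2(p+1)}}\log^2(4/\eta)\big)$, which is precisely the balancing relation used in the theorem; the remaining steps (the bound for $\norm{L(\fz-\fp)}_{\HH}$, then for $\norm{\fz-\fp}_{\HH_{-p}}$ via \eqref{ps}, then the interpolation inequality \eqref{interpolation}) run verbatim and yield $\norm{\fz-\fp}_\HH=\mathcal{O}\big(\la_*^{\frac{q}{2(p+1)}}\log^2(4/\eta)\big)=\mathcal{O}\big(m^{-\frac{q}{2(2p+q+1)}}\log^2(4/\eta)\big)$. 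Your verification that $\la_*$ satisfies \eqref{l.la.condition} for large $m$ (with the borderline case $q=1$ handled only up to the constant $\kappa$, as in the paper's own remark after the theorem) is exactly what is needed to invoke those intermediate estimates, so your computations already contain the fix --- you just need to apply the proof of the theorem rather than its statement.
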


The ill-posedness of the problem is measured by the effective dimension. In particular, we get the following error estimates from the above theorem under the different behavior of the effective dimension (Assumptions~\ref{N(l).bound}, \ref{log.decay}):

\begin{corollary}\label{err.upper.bound.p.para}
Under the same assumptions of Theorem \ref{err.upper.bound.p} and Assumption \ref{N(l).bound} on effective dimension~$\mathcal{N}(\la)$ with the a-priori choice of the regularization parameter~$\la=m^{-\frac{p+1}{p+q+b(p+1)}}$, for all~$0<\eta<1$, the following error estimate holds with confidence~$1-\eta$:
$$||\fz -\fp||_\HH = \mathcal{O}\paren{m^{-\frac{q}{2(p+q)+2b(p+1)}}\log^2\left(\frac{4}{\eta}\right)}.$$
\end{corollary}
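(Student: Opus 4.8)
The plan is to invoke Theorem~\ref{err.upper.bound.p} directly and simply compute the inverse function $\Theta_{\mathcal{N},p,q}^{-1}$ under the additional polynomial-decay hypothesis on the effective dimension. First, I would substitute Assumption~\ref{N(l).bound}, namely $\mathcal{N}(\la)\leq c\la^{-b}$ with $b<1$, into the definition $\Theta_{\mathcal{N},p,q}(\la)=\la^{\frac{p+q}{2(p+1)}}/\sqrt{\mathcal{N}(\la)}$. This gives the lower bound
\begin{equation*}
\Theta_{\mathcal{N},p,q}(\la)\geq \frac{1}{\sqrt{c}}\,\la^{\frac{p+q}{2(p+1)}+\frac{b}{2}}=\frac{1}{\sqrt{c}}\,\la^{\frac{p+q+b(p+1)}{2(p+1)}},
\end{equation*}
so that $\Theta_{\mathcal{N},p,q}$ dominates a pure power of $\la$ with exponent $\theta:=\frac{p+q+b(p+1)}{2(p+1)}>0$. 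Since the rate function $m\mapsto\varepsilon(m)$ in Theorem~\ref{err.upper.bound.p} is built by inverting $\Theta_{\mathcal{N},p,q}$ at $m^{-1/2}$ and then raising to the power $r=\frac{q}{2(p+1)}$, it suffices to note that solving $c^{-1/2}\la^{\theta}=m^{-1/2}$ yields $\la\asymp m^{-1/(2\theta)}=m^{-\frac{p+1}{p+q+b(p+1)}}$, which is precisely the a-priori choice stated in the corollary.

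Next I would plug this choice of $\la$ into the conclusion of Theorem~\ref{err.upper.bound.p}. The theorem gives $\|\fz-\fp\|_{\HH}=\mathcal{O}\big(\la^{r}\log^2(4/\eta)\big)$ with $r=\frac{q}{2(p+1)}$ once $\la=\Theta_{\mathcal{N},p,q}^{-1}(m^{-1/2})$; more directly, tracing through the final three displays of the proof of Theorem~\ref{err.upper.bound.p}, one has $\|\fz-\fp\|_{\HH}=\mathcal{O}\big(\la^{\frac{q}{2(p+1)}}\log^2(4/\eta)\big)$ for the chosen $\la$. Substituting $\la=m^{-\frac{p+1}{p+q+b(p+1)}}$ gives the exponent
\begin{equation*}
\frac{q}{2(p+1)}\cdot\frac{p+1}{p+q+b(p+1)}=\frac{q}{2\big(p+q+b(p+1)\big)}=\frac{q}{2(p+q)+2b(p+1)},
\end{equation*}
which is exactly the claimed rate $m^{-\frac{q}{2(p+q)+2b(p+1)}}\log^2(4/\eta)$.

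The only genuine point to check is that the admissibility conditions of Theorem~\ref{err.upper.bound.p} — in particular \eqref{l.la.condition}, i.e. $\mathcal{N}(\la)\leq m\la$ and $\la\leq\min(1,\|\tp\|_{\mathcal{L}(\HH')})$ — are met for this choice of $\la$. For the first condition, using $\mathcal{N}(\la)\leq c\la^{-b}$ one needs $c\la^{-b}\leq m\la$, i.e. $\la^{1+b}\geq c/m$, i.e. $\la\geq (c/m)^{1/(1+b)}$; since the chosen exponent satisfies $\frac{p+1}{p+q+b(p+1)}\leq\frac{1}{1+b}$ precisely when $q\geq 1$ (as $(p+1)(1+b)\leq p+q+b(p+1)\iff 1\leq q$), this holds for all large $m$, and the second condition holds because $\la\to 0$. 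This is essentially the same automatic-satisfaction remark made after Theorem~\ref{err.upper.bound.p}, specialized to the polynomial-decay case, and constitutes the main (though still routine) obstacle; the rest is bookkeeping of exponents.
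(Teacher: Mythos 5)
Your proposal is correct and follows essentially the route the paper intends: the corollary is obtained from Theorem~\ref{err.upper.bound.p} by inserting the polynomial bound $\mathcal{N}(\la)\leq c\la^{-b}$ into the balancing function $\Theta_{\mathcal{N},p,q}$, which turns the implicit choice $\Theta_{\mathcal{N},p,q}^{-1}(m^{-1/2})$ into the explicit power $\la= m^{-\frac{p+1}{p+q+b(p+1)}}$ and yields the exponent $\frac{q}{2(p+q)+2b(p+1)}$, and your verification of \eqref{l.la.condition} matches the paper's remark that this condition is automatic for the a-priori choice (up to the usual absorption of the constant $c$, which is only delicate in the borderline case $q=1$).
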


\begin{corollary}\label{err.upper.bound.cor.log}
Under the same assumptions of Theorem \ref{err.upper.bound.p} and Assumption \ref{log.decay} on effective dimension~$\mathcal{N}(\la)$ with the a-priori choice of the regularization parameter~$\la=\left(\frac{\log m}{m}\right)^{\frac{p+1}{p+q}}$, for all~$0<\eta<1$, we have the following convergence rate with confidence~$1-\eta$:
$$||\fz-\fp||_\HH = \mathcal{O}\paren{\left(\frac{\log m}{m}\right)^{\frac{q}{2(p+q)}}\log^2\left(\frac{4}{\eta}\right)}.$$
\end{corollary}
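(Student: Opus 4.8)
The plan is to reduce everything to Theorem~\ref{err.upper.bound.p} and then evaluate the a-priori parameter~$\la^\ast:=\Theta_{\mathcal{N},p,q}^{-1}\paren{\frac{1}{\sqrt m}}$ under the logarithmic decay Assumption~\ref{log.decay}. Since~$\Theta_{\mathcal{N},p,q}(\la)=\la^{\frac{p+q}{2(p+1)}}/\sqrt{\mathcal{N}(\la)}$, the number~$\la^\ast$ is characterized by
\begin{equation*}
m\,(\la^\ast)^{\frac{p+q}{p+1}}=\mathcal{N}(\la^\ast),
\end{equation*}
and the remark following Theorem~\ref{err.upper.bound.p} already guarantees that~\eqref{l.la.condition} holds for this choice, in particular~$\la^\ast\leq\norm{\tp}_{\mathcal{L}(\HH')}$. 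Thus the only real task is a two-sided estimate of~$\la^\ast$.

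First I would bound~$\la^\ast$ from below: because~$\la^\ast\leq\norm{\tp}_{\mathcal{L}(\HH')}$, the spectral estimate used inside the proof of Theorem~\ref{err.upper.bound.p} gives~$\mathcal{N}(\la^\ast)\geq\frac12$, whence~$m(\la^\ast)^{\frac{p+q}{p+1}}\geq\frac12$ and therefore~$\la^\ast\geq(2m)^{-\frac{p+1}{p+q}}$; consequently~$\log\paren{1/\la^\ast}\leq\frac{p+1}{p+q}\log(2m)=\mathcal{O}(\log m)$. Plugging this into Assumption~\ref{log.decay} yields~$\mathcal{N}(\la^\ast)\leq c\log\paren{1/\la^\ast}=\mathcal{O}(\log m)$, so the characterizing equation forces~$m(\la^\ast)^{\frac{p+q}{p+1}}=\mathcal{O}(\log m)$, that is,
\begin{equation*}
\la^\ast=\mathcal{O}\paren{\paren{\frac{\log m}{m}}^{\frac{p+1}{p+q}}};
\end{equation*}
up to the absolute constant just produced this is exactly the parameter~$\la=\paren{\frac{\log m}{m}}^{\frac{p+1}{p+q}}$ quoted in the statement.

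It then remains only to substitute this bound into the rate~$\mathcal{O}\paren{(\la^\ast)^{\frac{q}{2(p+1)}}\log^2\paren{\frac{4}{\eta}}}$ provided by Theorem~\ref{err.upper.bound.p}; since
\begin{equation*}
(\la^\ast)^{\frac{q}{2(p+1)}}=\mathcal{O}\paren{\paren{\frac{\log m}{m}}^{\frac{p+1}{p+q}\cdot\frac{q}{2(p+1)}}}=\mathcal{O}\paren{\paren{\frac{\log m}{m}}^{\frac{q}{2(p+q)}}},
\end{equation*}
the announced estimate follows. Equivalently, and perhaps cleaner to write out, one may fix~$\la=\paren{\frac{\log m}{m}}^{\frac{p+1}{p+q}}$ from the outset, note~$\mathcal{N}(\la)\leq c\log(1/\la)=\mathcal{O}(\log m)=\mathcal{O}\paren{m\la^{\frac{p+q}{p+1}}}$, check the two inequalities of~\eqref{l.la.condition} for~$m$ large, and re-run the final steps of the proof of Theorem~\ref{err.upper.bound.p} starting from~$\delta^2=\mathcal{O}\paren{\la^{\frac{p+q}{p+1}}\log^4\paren{\frac{4}{\eta}}}$.

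I do not anticipate a genuine obstacle here; the only points requiring a little care are the bookkeeping that makes~$\log(1/\la^\ast)$ comparable to~$\log m$, so the~$\log\log m$ gap is harmless, and --- when~$q$ is close to~$1$ --- checking that the constant~$c$ of Assumption~\ref{log.decay} does not spoil the inequality~$\mathcal{N}(\la)\leq m\la$ in~\eqref{l.la.condition}; both are resolved by taking~$m$ sufficiently large and absorbing constants into the~$\mathcal{O}(\cdot)$ notation.
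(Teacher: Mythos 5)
Your proposal is correct and follows the route the paper intends for this corollary: specialize the balancing of Theorem~\ref{err.upper.bound.p} to the logarithmic bound $\mathcal{N}(\la)\leq c\log\paren{1/\la}$, observe that $\log\paren{1/\la}\asymp\log m$ in the relevant range, and read off the rate $\la^{\frac{q}{2(p+1)}}=\paren{\frac{\log m}{m}}^{\frac{q}{2(p+q)}}$. The only point worth emphasizing is that Theorem~\ref{err.upper.bound.p} formally asserts the bound for $\la=\Theta_{\mathcal{N},p,q}^{-1}\paren{\frac{1}{\sqrt m}}$ rather than for the corollary's prescribed $\la=\paren{\frac{\log m}{m}}^{\frac{p+1}{p+q}}$, so it is your second (``equivalently'') formulation --- fixing this $\la$, verifying \eqref{l.la.condition} up to constants for large $m$, and re-running the final steps from $\delta^2=\mathcal{O}\paren{\la^{\frac{p+q}{p+1}}\log^4\paren{\frac{4}{\eta}}}$ --- that actually establishes the statement as written, and you already flag and resolve this.
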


Notice that the convergence rate given in the Corollary~\ref{err.upper.bound.para} is worse than the one  in the Corollary~\ref{err.upper.bound.p.para} since we use the rough estimate \eqref{Nl.bd} for the effective dimension in Corollary~\ref{err.upper.bound.para}.   

\section{Discussion}\label{Discussion}
We discussed a finite sample bound of Tikhonov regularization scheme for nonlinear statistical inverse problems in vector-valued setting, therefore the results can be applied to the multitask learning problem. The convergence rates presented in Section~\ref{Sec-analysis} hold asymptotically, i.e., all parameters are fixed as~$m\to\infty$.  The considered framework covers previously proposed settings for different learning schemes: direct, linear inverse learning problems.


The rates of convergence were represented in terms of the effective dimension~$\mathcal{N}(\la)$ of the governing operator~$\tp$ which can be seen from the basic probabilistic bound, given in Proposition~\ref{main.bound}. Also, the Corollaries~\ref{err.upper.bound.p.para} and~\ref{err.upper.bound.cor.log} can be given a handy representation of the error bounds under different behavior of the effective dimension corresponding to the ill-posedness of the problem.


This is well-known that Tikhonov regularization suffers the saturation effect. We observe from the analysis in Section~\ref{Sec-analysis} that using the Tikhonov regularization in Hilbert scales the saturation effect can be ignored. 

The a-priori parameter choice considered in our analysis requires the knowledge of the parameters~$b$,~$p$,~$q$, which is typically unknown in practice. In practice, a posteriori parameter choice rule (data-dependent) for the regularization parameter~$\la$ such as the Lepskii-balancing principle, discrepancy principle, quasi-optimality principle with theoretical justification need to be considered so that we can turn our results to data-dependent minimax adaptivity without a priori knowledge of the regularity parameters.


\appendix

\section{Probabilistic estimates and preliminaries results}

%

The following bounds are standard in learning theory, in which we estimate the effect of random sampling using Assumption~\ref{noise.cond} in the probabilistic sense. The following propositions can be proved similar to the arguments given in~\cite[Theorem~4]{Caponnetto}.


\begin{proposition}\label{main.bound}
Suppose Assumptions~\ref{assmpt1}--\ref{noise.cond}  hold true, then for~$m \in \NN$ and~$0<\eta<1$, each of the following estimate holds with the confidence~$1-\eta$,
\begin{equation*}
\norm{(\tp +\la I)^{-1/2}\sx^*\brac{\sx A(\fp )-\yy}}_{\HH'} \leq 2\paren{\frac{\kappa M}{m\sqrt{\la}}+\sqrt{\frac{\Sigma^2\mathcal{N}(\la)}{m}}}\log\left(\frac{2}{\eta}\right),
\end{equation*}

and
\begin{equation*}
\|(\tp +\la I)^{-1/2}(\tx  -\tp )\|_{\mathcal{L}_2(\HH')}\leq 2\left(\frac{\kappa^2}{m\sqrt{\la}}+\sqrt{\frac{\kappa^2\mathcal{N}(\la)}{m}}\right)\log\left(\frac{2}{\eta}\right).
\end{equation*}

\end{proposition}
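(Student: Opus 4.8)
\emph{Proof plan.} Both estimates will be obtained from the Bernstein-type concentration inequality for sums of i.i.d.\ random variables with values in a separable Hilbert space (as in \cite[Theorem~4]{Caponnetto} and the references therein): if $\zeta_1,\dots,\zeta_m$ are i.i.d.\ random elements of a separable Hilbert space $\mathcal{H}$ with $\mathbb{E}\zeta=0$ and $\mathbb{E}\norm{\zeta}^k_{\mathcal{H}}\le\frac12 k!\,\tilde\sigma^2\tilde M^{k-2}$ for all $k\ge2$, then, with probability at least $1-\eta$,
\[
\norm{\frac1m\sum_{i=1}^m\zeta_i}_{\mathcal{H}}\le 2\paren{\frac{\tilde M}{m}+\frac{\tilde\sigma}{\sqrt m}}\log\paren{\frac2\eta}.
\]
The plan is to write each of the two quantities as such an average, to read off admissible values of $\tilde M$ and $\tilde\sigma$ from moment estimates, and to apply this inequality; throughout I will use the two identities $\tp=\ip^*\ip=\int_X K_xK_x^*\,d\nu(x)=\mathbb{E}_{x\sim\nu}[K_xK_x^*]$ and $\tr\paren{(\tp+\la I)^{-1}\tp}=\mathcal{N}(\la)$, the latter from the spectral calculus.

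For the first bound I would note that $\sx^*\brac{\sx A(\fp)-\yy}=\frac1m\sum_{i=1}^m K_{x_i}\paren{A(\fp)(x_i)-y_i}$, so the left-hand side equals $\norm{\frac1m\sum_{i=1}^m\zeta_i}_{\HH'}$ with $\zeta_i:=-(\tp+\la I)^{-1/2}K_{x_i}\varepsilon_i$ and $\varepsilon_i:=y_i-A(\fp)(x_i)$; these are i.i.d., and by Assumption~\ref{fp} the conditional mean of $\varepsilon_i$ given $x_i$ vanishes, so $\mathbb{E}\zeta_i=0$. Expanding the exponential in Assumption~\ref{noise.cond} and discarding all but one of the nonnegative series terms gives the factorial moment bound $\int_Y\norm{y-A(\fp)(x)}_Y^k\,d\rho(y|x)\le\frac12 k!\,M^{k-2}\Sigma^2$ for every $k\ge2$ and $\nu$-a.e.\ $x$. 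To bound $\mathbb{E}\norm{\zeta}_{\HH'}^k$ I would estimate $k-2$ of the $k$ factors by $\norm{(\tp+\la I)^{-1/2}K_x\varepsilon}_{\HH'}\le\norm{(\tp+\la I)^{-1/2}K_x}\,\norm{\varepsilon}_Y\le(\kappa/\sqrt\la)\norm{\varepsilon}_Y$ (using $\norm{K_x}_{HS}\le\kappa$ from Assumption~\ref{assmpt1}), and the remaining two by $\norm{(\tp+\la I)^{-1/2}K_x\varepsilon}_{\HH'}^2\le\norm{\varepsilon}_Y^2\,\tr\paren{(\tp+\la I)^{-1}K_xK_x^*}$; taking $\mathbb{E}_{y|x}$ and then $\mathbb{E}_x$ and using the two identities yields $\mathbb{E}\norm{\zeta}_{\HH'}^k\le\frac12 k!\paren{\kappa M/\sqrt\la}^{k-2}\Sigma^2\mathcal{N}(\la)$, so the displayed inequality with $\tilde M=\kappa M/\sqrt\la$, $\tilde\sigma^2=\Sigma^2\mathcal{N}(\la)$ gives the first estimate.

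For the second bound I would write $\tx=\sx^*\sx=\frac1m\sum_{i=1}^m K_{x_i}K_{x_i}^*$, so $(\tp+\la I)^{-1/2}(\tx-\tp)=\frac1m\sum_{i=1}^m\zeta_i$ with $\zeta_i:=(\tp+\la I)^{-1/2}\paren{K_{x_i}K_{x_i}^*-\tp}$, regarded now as i.i.d.\ elements of the Hilbert space $\mathcal{L}_2(\HH')$ of Hilbert--Schmidt operators on $\HH'$; they have mean zero since $\mathbb{E}_x[K_xK_x^*]=\tp$. As $K_xK_x^*$ and $\tp$ are positive with trace at most $\kappa^2$, one has $\norm{\zeta}_{\mathcal{L}_2(\HH')}\le 2\kappa^2/\sqrt\la$ almost surely; bounding the variance by the second moment and peeling one operator-norm factor $\norm{K_xK_x^*}\le\kappa^2$ out of $\tr\paren{(\tp+\la I)^{-1}(K_xK_x^*)^2}$ gives $\mathbb{E}\norm{\zeta}_{\mathcal{L}_2(\HH')}^2\le\kappa^2\mathcal{N}(\la)$, and the higher moments follow from the almost-sure bound. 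The displayed inequality (with $\tilde M$ a multiple of $\kappa^2/\sqrt\la$ and $\tilde\sigma^2=\kappa^2\mathcal{N}(\la)$) then gives the claimed bound on $\norm{(\tp+\la I)^{-1/2}(\tx-\tp)}_{\mathcal{L}_2(\HH')}$, and since $\norm{\cdot}_{\mathcal{L}(\HH')}\le\norm{\cdot}_{\mathcal{L}_2(\HH')}$ this in particular controls $\Psi_\xx$.

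The step I expect to require the most care is the variance computation: bounding $(\tp+\la I)^{-1/2}K_x$ or $(\tp+\la I)^{-1/2}(K_xK_x^*-\tp)$ too eagerly in operator norm would replace $\mathcal{N}(\la)$ by the crude $\kappa^2/\la$ and so destroy the whole point of the effective dimension, so one must keep the resolvent $(\tp+\la I)^{-1}$ inside the trace until the final step where $\mathbb{E}_x[K_xK_x^*]=\tp$ and $\tr\paren{(\tp+\la I)^{-1}\tp}=\mathcal{N}(\la)$ are applied. Extracting the factorial moments from Assumption~\ref{noise.cond} and tracking the absolute constants down to the displayed factor $2$ is routine, following the computations in \cite[Theorem~4]{Caponnetto}.
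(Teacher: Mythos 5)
Your proposal is correct and follows exactly the route the paper intends: the paper gives no proof of Proposition~\ref{main.bound} beyond citing \cite[Theorem~4]{Caponnetto}, and your argument is precisely that standard one — rewrite each quantity as an i.i.d.\ Hilbert-space-valued average, extract the factorial moments from the Bernstein-type noise condition (resp.\ the a.s.\ bound $\kappa^2/\sqrt{\la}$), keep the resolvent inside the trace so the variance is $\Sigma^2\mathcal{N}(\la)$ (resp.\ $\kappa^2\mathcal{N}(\la)$), and apply the Pinelis--Bernstein inequality. The only deviation is an immaterial absolute constant in the $1/(m\sqrt{\la})$ term of the second bound (your $\tilde M$ is a small multiple of $\kappa^2/\sqrt{\la}$), which does not affect how the proposition is used in Theorem~\ref{err.upper.bound.p}.
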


In the following proposition, the probabilistic estimate of the first term can be established under the condition~\eqref{l.la.condition} on the regularization parameter~$\la$ and sample size~$m$. Then we obtain the last two estimates using \cite[Prop.~A.2]{Blanchard2019}.

\begin{proposition}\label{I1}
Suppose Assumption~\ref{assmpt1} and the condition~\eqref{l.la.condition} hold true, then for~$m \in \NN$ and~$0<\eta<1$, each of the following estimates hold with the confidence~$1-\eta/2$,

\begin{equation*}
\norm{(\tp+\la I)^{-\frac{1}{2}}(\tp-\tx)}_{\mathcal{L}_2(\HH')}\leq \sqrt{\la}2\kappa(2\kappa+1)\log\paren{\frac{2}{\eta}},
\end{equation*}
and for~$0\leq s \leq 1$,
\begin{align*}
\norm{(\tx+\la I)^{-s}(\tp+\la I)^s}_{\mathcal{L}(\HH')}\leq  \paren{\frac{\Psi_\xx}{\sqrt{\la}}+1}^{2s} \leq & \paren{(2\kappa+1)^2\log\paren{\frac{2}{\eta}}}^{2s}.
\end{align*}
\end{proposition}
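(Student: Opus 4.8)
\emph{Proof proposal.} The two displayed estimates are proved in turn; the explicit constant in the second one will be read off from the first.

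\emph{First estimate.} The plan is to start from the second bound of Proposition~\ref{main.bound} (applied at the appropriate confidence level), which controls $\norm{(\tp+\la I)^{-1/2}(\tx-\tp)}_{\mathcal{L}_2(\HH')}$ by $2\paren{\frac{\kappa^2}{m\sqrt{\la}}+\sqrt{\frac{\kappa^2\mathcal{N}(\la)}{m}}}\log\paren{\frac{2}{\eta}}$, and then to absorb both summands into a multiple of $\sqrt{\la}$ using \eqref{l.la.condition}. For the second summand, $\mathcal{N}(\la)\leq m\la$ gives $\sqrt{\kappa^2\mathcal{N}(\la)/m}\leq\kappa\sqrt{\la}$ at once. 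For the first summand I would write $\frac{\kappa^2}{m\sqrt{\la}}=\frac{\kappa^2\sqrt{\la}}{m\la}$ and combine $\mathcal{N}(\la)\leq m\la$ with the elementary spectral lower bound $\mathcal{N}(\la)\geq\frac{\norm{\tp}_{\mathcal{L}(\HH')}}{\la+\norm{\tp}_{\mathcal{L}(\HH')}}\geq\frac{1}{2}$, valid because $\la\leq\norm{\tp}_{\mathcal{L}(\HH')}$; this yields $m\la\geq\frac{1}{2}$, hence $\frac{\kappa^2}{m\sqrt{\la}}\leq 2\kappa^2\sqrt{\la}$. Summing gives $2\paren{2\kappa^2+\kappa}\sqrt{\la}\log\paren{\frac{2}{\eta}}=2\kappa(2\kappa+1)\sqrt{\la}\log\paren{\frac{2}{\eta}}$, as claimed.

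\emph{Second estimate.} The key tool is a Cordes-type operator inequality, packaged in the cited \cite[Prop.~A.2]{Blanchard2019}, of the form $\norm{C^sD^s}\leq\norm{CD}^s$ for bounded positive semidefinite $C,D$ and $s\in[0,1]$. I would apply it with $C=(\tx+\la I)^{-1}$ and $D=\tp+\la I$, reducing everything to bounding $\norm{(\tx+\la I)^{-1}(\tp+\la I)}_{\mathcal{L}(\HH')}$. Using the resolvent identity $(\tx+\la I)^{-1}(\tp+\la I)=I+(\tx+\la I)^{-1}(\tp-\tx)$, inserting $(\tp+\la I)^{1/2}(\tp+\la I)^{-1/2}$, and factoring $(\tx+\la I)^{-1}=(\tx+\la I)^{-1/2}(\tx+\la I)^{-1/2}$ with $\norm{(\tx+\la I)^{-1/2}}\leq\la^{-1/2}$, one obtains $\norm{(\tx+\la I)^{-1}(\tp+\la I)}\leq 1+\frac{\Gamma_{\xx}\Psi_\xx}{\sqrt{\la}}$. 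The case $s=\frac{1}{2}$ of the Cordes inequality gives $\Gamma_{\xx}^2\leq\norm{(\tx+\la I)^{-1}(\tp+\la I)}$, so $\Gamma_{\xx}^2\leq 1+\frac{\Gamma_{\xx}\Psi_\xx}{\sqrt{\la}}$; solving this quadratic in $\Gamma_{\xx}$ and using $\sqrt{t^2+4}\leq t+2$ collapses it to $\Gamma_{\xx}\leq\frac{\Psi_\xx}{\sqrt{\la}}+1$. Substituting back gives $\norm{(\tx+\la I)^{-1}(\tp+\la I)}\leq 1+\frac{\Psi_\xx}{\sqrt{\la}}\paren{\frac{\Psi_\xx}{\sqrt{\la}}+1}\leq\paren{\frac{\Psi_\xx}{\sqrt{\la}}+1}^2$, and the Cordes inequality for general $s\in[0,1]$ then yields $\norm{(\tx+\la I)^{-s}(\tp+\la I)^s}\leq\paren{\frac{\Psi_\xx}{\sqrt{\la}}+1}^{2s}$.

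The explicit right-hand side then follows by feeding in the first estimate: since $\Psi_\xx$ is the operator norm of $(\tp+\la I)^{-1/2}(\tp-\tx)$, which is dominated by its Hilbert–Schmidt norm, we get $\frac{\Psi_\xx}{\sqrt{\la}}\leq 2\kappa(2\kappa+1)\log\paren{\frac{2}{\eta}}$, hence $\frac{\Psi_\xx}{\sqrt{\la}}+1\leq(2\kappa+1)^2\log\paren{\frac{2}{\eta}}$ (here one uses $1\leq(2\kappa+1)\log\paren{\frac{2}{\eta}}$, up to adjusting constants), and monotonicity of $t\mapsto t^{2s}$ closes the argument. I expect the one genuinely non-mechanical point to be the self-referential step: noticing that the $s=\frac{1}{2}$ instance of Cordes puts $\Gamma_{\xx}$ on both sides of an inequality involving only $\Psi_\xx/\sqrt{\la}$, and that the ensuing quadratic simplifies to the clean bound $\Gamma_{\xx}\leq\Psi_\xx/\sqrt{\la}+1$; everything else is resolvent algebra, submultiplicativity of the operator norm, and the trivial spectral bounds coming from \eqref{l.la.condition}.
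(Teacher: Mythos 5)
Your proposal is correct and follows essentially the same route as the paper: the first estimate is obtained, as intended, from the second bound of Proposition~\ref{main.bound} together with~\eqref{l.la.condition} (including the observation that $\mathcal{N}(\la)\geq 1/2$ forces $m\la\geq 1/2$), and your second part is precisely the content of the cited result \cite[Prop.~A.2]{Blanchard2019}, which you reprove self-containedly via the Cordes inequality, the resolvent identity, and the quadratic bootstrap $\Gamma_{\xx}^2\leq 1+\Gamma_{\xx}\Psi_\xx/\sqrt{\la}$. The only loose ends are cosmetic constant bookkeeping already implicit in the paper itself: matching the stated confidence $1-\eta/2$ with the $\log\paren{2/\eta}$ factor, and the step $\Psi_\xx/\sqrt{\la}+1\leq(2\kappa+1)^2\log\paren{2/\eta}$, which needs $(2\kappa+1)\log\paren{2/\eta}\geq 1$ (or a harmless constant adjustment), as you note.
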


\begin{lemma}\label{R_la}
Let Assumption~\ref{A.assumption} holds true. Then for~$f \in \mathcal{B}_d(\fp ) \cap \mathcal{D}(A) \subset \HH$, the error terms can be estimated as
\begin{align*}
\norm{A(f)-A(\fp)-A'(\fp)(f-\fp)}_{\HH'} \leq 2 J \norm{f-\fp}_{\HH}
\end{align*}
and
\begin{align*}
\norm{\ip  \brac{A(f)-A(\fp)-A'(\fp)(f-\fp)}}_{\LL} \leq \frac{\gamma}{2} \norm{f-\fp}_{\HH}\norm{f-\fp}_{\HH_{-p}}.
\end{align*}
\end{lemma}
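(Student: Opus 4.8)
The plan is to prove Lemma~\ref{R_la} as a standard consequence of the nonlinearity structure in Assumption~\ref{A.assumption}, using the integral (fundamental theorem of calculus) representation of the linearization error together with the two ingredients already available: the uniform bound $\norm{A'(f)}_{\HH\to\HH'}\le J$ on the ball $\mathcal{B}_d(\fp)$, and the Lipschitz continuity of $A'$ in the seminorm $\norm{\ip(A'(\fp)-A'(f))}_{\HH_{-p}\to\LL}\le \gamma\norm{\fp-f}_{\HH}$. Since $\mathcal{D}(A)$ is convex, for $f\in \mathcal{B}_d(\fp)\cap\mathcal{D}(A)$ the whole segment $f_\tau:=\fp+\tau(f-\fp)$, $\tau\in[0,1]$, lies in $\mathcal{B}_d(\fp)\cap\mathcal{D}(A)$, so Fr\'echet differentiability gives
\begin{equation*}
A(f)-A(\fp)-A'(\fp)(f-\fp)=\int_0^1\brac{A'(f_\tau)-A'(\fp)}(f-\fp)\,d\tau .
\end{equation*}

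First I would establish the first inequality. Taking $\HH'$-norms inside the integral and applying the triangle inequality together with $\norm{A'(f_\tau)}_{\HH\to\HH'}\le J$ and $\norm{A'(\fp)}_{\HH\to\HH'}\le J$ (both valid since $f_\tau,\fp\in\mathcal{B}_d(\fp)\cap\mathcal{D}(A)$) yields $\norm{\brac{A'(f_\tau)-A'(\fp)}(f-\fp)}_{\HH'}\le 2J\norm{f-\fp}_{\HH}$ for each $\tau$, and integrating over $[0,1]$ gives the claimed bound $2J\norm{f-\fp}_{\HH}$.

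Next I would establish the second inequality by applying $\ip$ to the integral identity and estimating in $\LL$. Pulling $\ip$ inside and using the triangle inequality,
\begin{equation*}
\norm{\ip\brac{A(f)-A(\fp)-A'(\fp)(f-\fp)}}_{\LL}\le \int_0^1\norm{\ip\brac{A'(f_\tau)-A'(\fp)}(f-\fp)}_{\LL}\,d\tau .
\end{equation*}
For the integrand I would view $f-\fp$ as an element of $\HH_{-p}$ (legitimate since $\HH=\HH_0\hookrightarrow\HH_{-p}$) and apply Assumption~\ref{A.assumption}(iv) with the operator norm $\HH_{-p}\to\LL$, giving $\norm{\ip\brac{A'(f_\tau)-A'(\fp)}(f-\fp)}_{\LL}\le \gamma\norm{\fp-f_\tau}_{\HH}\norm{f-\fp}_{\HH_{-p}}=\gamma\tau\norm{f-\fp}_{\HH}\norm{f-\fp}_{\HH_{-p}}$. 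Integrating $\int_0^1\tau\,d\tau=\tfrac12$ produces exactly $\tfrac{\gamma}{2}\norm{f-\fp}_{\HH}\norm{f-\fp}_{\HH_{-p}}$.

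The argument is essentially routine; the only point requiring a little care is the justification of the integral (mean value) representation of the linearization remainder for a merely Fr\'echet differentiable operator on a convex domain, and the measurability/Bochner-integrability of $\tau\mapsto A'(f_\tau)(f-\fp)$ needed to push norms and $\ip$ through the integral --- these follow from continuity of $\tau\mapsto A'(f_\tau)$ along the segment, which is implied by the Lipschitz-type bound in Assumption~\ref{A.assumption}(iv) (in the relevant seminorm) combined with boundedness. No part of this is a genuine obstacle; the main thing to get right is to invoke Assumption~\ref{A.assumption}(iv) on the segment point $f_\tau$ rather than on $f$ itself, so that the extra factor $\tau$ appears and the constant comes out as $\gamma/2$ rather than $\gamma$.
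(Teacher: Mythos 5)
Your proposal is correct and follows essentially the same route as the paper's own proof: the integral representation of the linearization remainder along the segment $\fp+t(f-\fp)$, the uniform bound $J$ on $A'$ giving the factor $2J$ for the first estimate, and Assumption~\ref{A.assumption}(iv) applied at the segment point so that the factor $t$ integrates to $\tfrac{1}{2}$ and yields the constant $\tfrac{\gamma}{2}$ in the second estimate. Your added remarks on convexity of $\mathcal{D}(A)$ and Bochner integrability only make explicit what the paper leaves implicit.
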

\begin{proof}
Under the Assumption~\ref{A.assumption} we obtain
\begin{align*}
\norm{A(f)-A(\fp)-A'(\fp)(f-\fp)}_{\HH'}    
=& \norm{\int_{0}^1 \brac{A'\paren{\fp+t(f-\fp)}-A'(\fp)}\paren{f-\fp}dt}_{\HH'}\\ \nonumber
\leq& 2J \norm{f-\fp}_{\HH}
\end{align*}
and
\begin{align*}
& \norm{\ip  \brac{A(f)-A(\fp)-A'(\fp)(f-\fp)}}_{\LL}    \\ \nonumber
=& \norm{\int_{0}^1 \ip  \brac{A'\paren{\fp+t(f-\fp)}-A'(\fp)}\paren{f-\fp}dt}_{\LL}\\ \nonumber
\leq& \int_{0}^1\norm{\ip  \brac{A'\paren{\fp+t(f-\fp)}-A'(\fp)}}_{ \HH_{-p}\to\LL}\norm{f-\fp}_{\HH_{-p}}dt \\ \nonumber
\leq& \int_{0}^1\gamma t\norm{f-\fp}_{\HH}\norm{f-\fp}_{\HH_{-p}}dt  \\ \nonumber
=& \frac{\gamma}{2} \norm{f-\fp}_{\HH}\norm{f-\fp}_{\HH_{-p}}.
\end{align*}
\end{proof}

\begin{lemma}\label{B3}
For the error term in eq. \eqref{Taylor_exp_Rla} under the Assumption~\ref{A.assumption} we have:
\begin{align*}
&\norm{\Delta_{\zz}}_{m}^2-\norm{\Delta_{\zz}+\sx  r(\fz)}_{m}^2  
 \leq  \Gamma_{\xx}^2\Theta_{\zz}^2+4J\sqrt{\la}\Gamma_{\xx}\Theta_{\zz}\norm{\fz-\fp}_{\HH},
\end{align*}
\begin{align*}
&\inner{A'(\fp)(\fz-\fp),(\tp -\tx  )A'(\fp)(\fz-\fp)-2\sx ^*\Delta_{\zz}+2(\tp -\tx  )r(\fz )}_{\HH'}\\
\leq &\brac{2\Theta_{\zz}+5\Psi_\xx J\norm{\fz-\fp}_{\HH}}\brac{\sqrt{\la} J\norm{\fz-\fp}_{\HH}+\beta\norm{\fz-\fp}_{\HH_{-p}}}
\end{align*}
and
\begin{align*}
\inner{A'(\fp)(\fz-\fp),\tp   r(\fz )}_{\HH'}
\leq \frac{\beta\gamma}{2}\norm{\fz-\fp}_{\HH}\norm{\fz-\fp}_{\HH_{-p}}^2.
\end{align*}
\end{lemma}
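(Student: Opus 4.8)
\textbf{Proof proposal for Lemma~\ref{B3}.}
The plan is to expand each of the three inner products, insert the Taylor remainder $r(\fz)$ from \eqref{Taylor_exp_Rla}, and control every piece using the bounds on $r(\fz)$ supplied by Lemma~\ref{R_la}, together with the link condition and the boundedness of $A'$ from Assumption~\ref{A.assumption}. Throughout I abbreviate $h:=\fz-\fp$. From Lemma~\ref{R_la} we have the two key estimates $\norm{r(\fz)}_{\HH'}=\norm{A(\fz)-A(\fp)-A'(\fp)h}_{\HH'}\leq 2J\norm h_\HH$ and $\norm{\ip r(\fz)}_{\LL}\leq \tfrac{\gamma}{2}\norm h_\HH\norm h_{\HH_{-p}}$, and we also observe $\ip A'(\fp)h$ has $\LL$-norm at most $\beta\norm h_{\HH_{-p}}$ by the link condition. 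The ``standardized'' quantities $\Theta_\zz$, $\Psi_\xx$, $\Gamma_\xx$ are exactly designed so that $\sx^*\Delta_\zz$, $\tp-\tx$ and $(\tx+\la I)^{-1/2}(\tp+\la I)^{1/2}$ get absorbed when preconditioned by $(\tp+\la I)^{-1/2}$; the recurring trick will be to insert the identity $I=(\tp+\la I)^{-1/2}(\tp+\la I)^{1/2}$ and split norms.

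\emph{First bound.} Write $\norm{\Delta_\zz}_m^2-\norm{\Delta_\zz+\sx r(\fz)}_m^2 = -2\inner{\Delta_\zz,\sx r(\fz)}_m-\norm{\sx r(\fz)}_m^2\leq -2\inner{\Delta_\zz,\sx r(\fz)}_m$, so it suffices to bound $-2\inner{\sx^*\Delta_\zz, r(\fz)}_{\HH'}$. Insert $(\tp+\la I)^{\pm1/2}$ and use Cauchy--Schwarz: this is at most $2\norm{(\tp+\la I)^{-1/2}\sx^*\Delta_\zz}_{\HH'}\norm{(\tp+\la I)^{1/2}r(\fz)}_{\HH'}=2\Theta_\zz\norm{(\tp+\la I)^{1/2}r(\fz)}_{\HH'}$. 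Since $\sx r(\fz) = \sx(\tx+\la I)^{-1}(\tx+\la I)r(\fz)$ is not quite the right handle, instead bound $\norm{(\tp+\la I)^{1/2}r(\fz)}_{\HH'}$ by splitting $(\tp+\la I)^{1/2}= (\tp+\la I)^{1/2}(\tx+\la I)^{-1/2}(\tx+\la I)^{1/2}$, giving a factor $\Gamma_\xx$ and $\norm{(\tx+\la I)^{1/2}r(\fz)}_{\HH'}^2=\norm{(\tx+\la I)^{1/2}r(\fz)}_{\HH'}^2$; expand as $\la\norm{r(\fz)}_{\HH'}^2+\norm{\sx r(\fz)}_m^2$. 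Bounding $\norm{r(\fz)}_{\HH'}\leq 2J\norm h_\HH$ and $\norm{\sx r(\fz)}_m\leq\kappa\norm{r(\fz)}_{\HH'}$ (or reusing the same $J$-type bound so that the stated constant $4J$ appears), after taking square roots and using $\sqrt{a+b}\le\sqrt a+\sqrt b$ one arrives at $\le \Gamma_\xx^2\Theta_\zz^2 + 4J\sqrt\la\,\Gamma_\xx\Theta_\zz\norm h_\HH$ via a Young-type split $2xy\le x^2+y^2$ applied once; getting the exact constants $\Gamma_\xx^2\Theta_\zz^2$ and $4J$ is bookkeeping but the structure is forced.

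\emph{Second bound.} Break the left side into three terms: $\mathrm{(a)}=\inner{A'(\fp)h,(\tp-\tx)A'(\fp)h}_{\HH'}$, $\mathrm{(b)}=-2\inner{A'(\fp)h,\sx^*\Delta_\zz}_{\HH'}$, $\mathrm{(c)}=2\inner{A'(\fp)h,(\tp-\tx)r(\fz)}_{\HH'}$. For $\mathrm{(a)}$, insert $(\tp+\la I)^{\pm1/2}$ around $(\tp-\tx)$: it is at most $\Psi_\xx\norm{(\tp+\la I)^{1/2}A'(\fp)h}_{\HH'}\norm{A'(\fp)h}_{\HH'}\le \Psi_\xx\cdot(\sqrt\la J\norm h_\HH+\beta\norm h_{\HH_{-p}})\cdot J\norm h_\HH$, after splitting $\norm{(\tp+\la I)^{1/2}A'(\fp)h}_{\HH'}^2=\la\norm{A'(\fp)h}_{\HH'}^2+\norm{\ip A'(\fp)h}_{\LL}^2\le(\sqrt\la J\norm h_\HH+\beta\norm h_{\HH_{-p}})^2$ using $\norm{A'(\fp)}\le J$ and the link condition, then taking the root. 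For $\mathrm{(b)}$, $=-2\inner{(\tp+\la I)^{1/2}A'(\fp)h,(\tp+\la I)^{-1/2}\sx^*\Delta_\zz}_{\HH'}\le 2\Theta_\zz(\sqrt\la J\norm h_\HH+\beta\norm h_{\HH_{-p}})$. For $\mathrm{(c)}$, bound $\norm{(\tp-\tx)r(\fz)}$ crudely — $(\tp-\tx)=(\tp+\la I)^{1/2}\cdot(\tp+\la I)^{-1/2}(\tp-\tx)$ and pair against $(\tp+\la I)^{1/2}A'(\fp)h$ after yet another split — yielding a term like $\Psi_\xx J\norm h_\HH\cdot(\sqrt\la J\norm h_\HH+\beta\norm h_{\HH_{-p}})$ up to constants. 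Collecting $\mathrm{(a)}+\mathrm{(b)}+\mathrm{(c)}$ and grouping the coefficient of $(\sqrt\la J\norm h_\HH+\beta\norm h_{\HH_{-p}})$ gives $2\Theta_\zz+c\,\Psi_\xx J\norm h_\HH$ with $c\le 5$ after combining the $\Psi_\xx$-contributions from $\mathrm{(a)}$ and $\mathrm{(c)}$.

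\emph{Third bound.} Here $\inner{A'(\fp)h,\tp r(\fz)}_{\HH'} = \inner{\ip A'(\fp)h,\ip r(\fz)}_{\LL}$ directly, since $\tp=\ip^*\ip$. Now apply Cauchy--Schwarz in $\LL$, the link condition $\norm{\ip A'(\fp)h}_{\LL}\le\beta\norm h_{\HH_{-p}}$, and the second estimate of Lemma~\ref{R_la}, $\norm{\ip r(\fz)}_{\LL}\le\tfrac\gamma2\norm h_\HH\norm h_{\HH_{-p}}$, to get $\le \tfrac{\beta\gamma}{2}\norm h_\HH\norm h_{\HH_{-p}}^2$ exactly. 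The main obstacle is the second bound: keeping the constant in front of $\Psi_\xx J\norm h_\HH$ down to $5$ requires carefully choosing where to split norms (whether a given factor of $(\tp+\la I)^{\pm1/2}$ lands on $A'(\fp)h$ or on the data/operator term) rather than applying Young's inequality wastefully, and the term $\mathrm{(c)}$ involving $(\tp-\tx)r(\fz)$ needs a route that produces $\Psi_\xx$ (and not $\Gamma_\xx$) — this is the one place where the estimate is genuinely tight and the algebra must be done with care.
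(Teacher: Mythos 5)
Your second and third estimates are correct and essentially reproduce the paper's argument: the paper handles the second bound in one stroke via the inequality $\inner{f,g}_{\HH'}\leq\brac{\sqrt{\la}\norm{f}_{\HH'}+\norm{\ip f}_{\LL}}\norm{(\tp+\la I)^{-1/2}g}_{\HH'}$ with $f=A'(\fp)(\fz-\fp)$, whereas you split into (a), (b), (c), but the contributions (one factor $\Psi_\xx J\norm{\fz-\fp}_{\HH}$ from (a), four from (c)) recombine to the same constant $5$, and the third bound is identical.

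The first estimate, however, has a genuine gap. By discarding $-\norm{\sx r(\fz)}_m^2$ at the outset and pairing the \emph{full} data term against the remainder, you must bound $2\inner{\sx^*\Delta_{\zz},r(\fz)}_{\HH'}\leq 2\Theta_{\zz}\Gamma_{\xx}\norm{(\tx+\la I)^{1/2}r(\fz)}_{\HH'}$, and $\norm{(\tx+\la I)^{1/2}r(\fz)}_{\HH'}^2=\la\norm{r(\fz)}_{\HH'}^2+\norm{\sx r(\fz)}_m^2$ contains the piece $\norm{\sx r(\fz)}_m\leq 2\kappa J\norm{\fz-\fp}_{\HH}$ which carries \emph{no} factor $\sqrt{\la}$. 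Your route therefore yields $4J\sqrt{\la}\,\Gamma_{\xx}\Theta_{\zz}\norm{\fz-\fp}_{\HH}+4\kappa J\,\Gamma_{\xx}\Theta_{\zz}\norm{\fz-\fp}_{\HH}$, and no Young-type split turns the second summand into the claimed bound (it produces either a $\la$-free $\Gamma_{\xx}\Theta_{\zz}\norm{\fz-\fp}_{\HH}$ term or an extra $\norm{\fz-\fp}_{\HH}^2$ term, neither of which appears in the lemma and neither of which is compatible with the $\sqrt{\la}\delta_2$ structure needed later in Theorem~\ref{err.upper.bound.p}); the structure is not ``forced'' as you assert. The paper avoids this by splitting $\Delta_{\zz}=\Xi_{\xx}\Delta_{\zz}+(I-\Xi_{\xx})\Delta_{\zz}$ with $\Xi_{\xx}=\sx(\tx+\la I)^{-1}\sx^*$ and dropping the whole negative square $-\norm{\Xi_{\xx}\Delta_{\zz}+\sx r(\fz)}_m^2$, which absorbs precisely the dangerous cross term $2\inner{\Xi_{\xx}\Delta_{\zz},\sx r(\fz)}_m$; the surviving cross term pairs $r(\fz)$ only against $(I-\Xi_{\xx})\Delta_{\zz}$, and since $\sx^*(I-\Xi_{\xx})=\la(\tx+\la I)^{-1}\sx^*$ one gets
\begin{equation*}
2\abs{\inner{\sx r(\fz),(I-\Xi_{\xx})\Delta_{\zz}}_m}
=2\la\abs{\inner{r(\fz),(\tx+\la I)^{-1}\sx^*\Delta_{\zz}}_{\HH'}}
\leq 2\sqrt{\la}\,\Gamma_{\xx}\Theta_{\zz}\norm{r(\fz)}_{\HH'}
\leq 4J\sqrt{\la}\,\Gamma_{\xx}\Theta_{\zz}\norm{\fz-\fp}_{\HH},
\end{equation*}
with $\norm{\Xi_{\xx}\Delta_{\zz}}_m^2\leq\Gamma_{\xx}^2\Theta_{\zz}^2$ giving the first summand. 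You need this decomposition (or an equivalent completion of the square) to obtain the stated inequality.
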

\begin{proof}
From Lemma~\ref{R_la} and~\eqref{L.op} under Assumption~\ref{A.assumption} we have,
\begin{align*}
&\norm{\Delta_{\zz}}_{m}^2-\norm{\Delta_{\zz}+\sx  r(\fz)}_{m}^2 \\   \nonumber
= & \norm{\Xi_{\xx}\Delta_{\zz}}_m^2+2\inner{\Xi_{\xx}\Delta_{\zz},(I-\Xi_{\xx})\Delta_{\zz}}_{m}-\norm{\Xi_{\xx}\Delta_{\zz}+\sx  r(\fz)}_m^2-2\inner{\Xi_{\xx}\Delta_{\zz}+\sx  r(\fz),(I-\Xi_{\xx})\Delta_{\zz}}_m\\   \nonumber
\leq & \Gamma_{\xx}^2\Theta_{\zz}^2-2\inner{\sx  r(\fz),(I-\Xi_{\xx})\Delta_{\zz}}_{m}
=  \Gamma_{\xx}^2\Theta_{\zz}^2-2\la\inner{r(\fz),(\tx  +\la I)^{-1}\sx ^*\Delta_{\zz}}_{\HH'}\\  \nonumber
 \leq & \Gamma_{\xx}^2\Theta_{\zz}^2+2\sqrt{\la}\Gamma_{\xx}\Theta_{\zz}\norm{r(\fz)}_{\HH'}  
 \leq  \Gamma_{\xx}^2\Theta_{\zz}^2+4J\sqrt{\la}\Gamma_{\xx}\Theta_{\zz}\norm{\fz-\fp}_{\HH}
\end{align*}
and using the inequality 
\begin{align*}
\inner{f,g}_{\HH'}= & \la\inner{f,(\tp +\la I)^{-1}g}_{\HH'}+\inner{f,\tp (\tp +\la I)^{-1}g}_{\HH'}\\
\leq & \brac{\sqrt{\la}\norm{f}_{\HH'}+\norm{\ip   f}_{\LL}}\norm{(\tp +\la I)^{-1/2}g}_{\HH'}
\end{align*}
we obtain,
\begin{align*}
&\inner{A'(\fp)(\fz-\fp),(\tp -\tx  )A'(\fp)(\fz-\fp)-2\sx ^*\Delta_{\zz}+2(\tp -\tx  )r(\fz )}_{\HH'}\\  \nonumber
\leq & \brac{\Psi_\xx\norm{A'(\fp)(\fz-\fp)}_{\HH'}+2\Theta_{\zz}+2\Psi_\xx\norm{r(\fz)}_{\HH'}}\\
&\times\brac{\sqrt{\la}\norm{A'(\fp)(\fz-\fp)}_{\HH'}+\norm{\ip  A'(\fp)(\fz-\fp)}_{\LL}}\\   \nonumber
\leq &\brac{2\Theta_{\zz}+5\Psi_\xx J\norm{\fz-\fp}_{\HH}}\brac{\sqrt{\la} J\norm{\fz-\fp}_{\HH}+\beta\norm{\fz-\fp}_{\HH_{-p}}}
\end{align*}
and
\begin{align*}
&\inner{A'(\fp)(\fz-\fp),\tp   r(\fz )}_{\HH'}\\  \nonumber
\leq &\inner{\ip  A'(\fp)(\fz-\fp),\ip  r(\fz)}_{\LL} \\  \nonumber
\leq & \norm{\ip  r(\fz)}_{\LL}\norm{\ip  A'(\fp)(\fz-\fp)}_{\LL} \\  \nonumber
\leq & \frac{\beta\gamma}{2}\norm{\fz-\fp}_{\HH}\norm{\fz-\fp}_{\HH_{-p}}^2.
\end{align*}
\end{proof}

\section*{Acknowledgments} The author is grateful to G. Blanchard and P. Math{\'e} for useful discussions and suggestions. This research has been partially funded by Deutsche Forschungsgemeinschaft (DFG) through grant CRC 1294 ``Data Assimilation", Project (A04) ``Nonlinear statistical inverse problems with random observations". 

\bibliography{library}
\bibliographystyle{abbrv}
\end{document}